\theoremstyle{definition}
\newtheorem{assumption}{Assumption}
\newcommand{\EE}[2][]{\mathbb{E}_{#1}\left[#2\right]} 
\newcommand{\PP}[2][]{\mathbb{P}_{#1}\left[#2\right]} 
\newcommand{\II}[1]{\mathbb{I}\left[#1\right]}        
\newcommand{\B}{\mathcal{B}}
\newcommand{\C}{\mathcal{C}}
\newcommand{\I}{\mathcal{I}}
\newcommand{\J}{\mathcal{J}}
\renewcommand{\L}{\mathcal{L}}
\newcommand{\N}{\mathcal{N}}
\renewcommand{\S}{\mathcal{S}}
\newcommand{\T}{\mathcal{T}}
\newcommand{\U}{\mathcal{U}}
\newcommand{\X}{\mathcal{X}}
\newcommand{\eps}{\varepsilon}
\newcommand{\IR}{\mathds{R}}
\DeclareMathOperator*{\argmax}{\mathrm{argmax}}
\DeclareMathOperator{\diam}{\mathrm{diam}}
\providecommand{\abs}[1]{\left\lvert#1\right\rvert} 
\providecommand{\set}[1]{\left\lbrace#1\right\rbrace}
\newcommand{\defin}{\stackrel{\mathrm{def}}{=}}
\begin{document}

\title{Limited depth bandit-based strategy for Monte Carlo planning in continuous action spaces}
\titlerunning{Limited depth bandits for MC planning}


\author{Ricardo Quinteiro\inst{1} \and
Francisco S.\ Melo\inst{1,2} \and
Pedro A.\ Santos\inst{1,2}}

\authorrunning{R.\ Quinteiro et al.}

\institute{Instituto Superior T\'{e}cnico, University of Lisbon, Portugal\\
\and
INESC-ID, Lisbon, Portugal}

\maketitle

\begin{abstract}

This paper addresses the problem of optimal control using search trees. We start by considering multi-armed bandit problems with continuous action spaces and propose LD-HOO, a limited depth variant of the hierarchical optimistic optimization (HOO) algorithm. We provide a regret analysis for LD-HOO and show that, asymptotically, our algorithm exhibits the same cumulative regret as the original HOO while being faster and more memory efficient. We then propose a Monte Carlo tree search algorithm based on LD-HOO for optimal control problems and illustrate the resulting approach's application in several optimal control problems. 

\keywords{Multiarmed bandits \and Monte Carlo tree search \and Planning in continuous action space}
\end{abstract}


\section{Introduction}%
\label{Sec:Intro}

Recent years have witnessed remarkable successes of artificial intelligence, brought about by the effective combination of powerful machine learning algorithms based on neural networks and efficient planning approaches based on Monte Carlo tree search. Perhaps the most celebrated is AlphaGo and its successors \cite{silver16nature,silver17nature,silver18science,schrittwieser20nature}, which could attain super-human proficiency in several complex games such as chess, Go, or Shogi. Even before AlphaGo, Monte Carlo tree search approaches were already widely used in complex planning problems due to their efficiency, mild requirements, and robust theoretical guarantees \cite{browne12tcig}	.
 
Monte Carlo tree search (MCTS) is a family of online planning methods initially designed to solve sequential decision problems such as Markov decision problems \cite{kocsis06ecml,kearns99ijcai}. At each step $t$, the decision-maker builds a search tree with its current state, $s_t$, as the root node. Its children correspond to the actions available to the agent at $s_t$, and the algorithm proceeds by estimating a value for each such action by using {\em rollouts} and {\em efficient exploration}. Rollouts provide empirical estimates for the nodes' values in the tree, while efficient exploration directs tree expansion towards the more promising regions of the state space (see Fig.~\ref{Fig:MCTS} for an illustration).

\begin{figure}[!tb]
\centering
\includegraphics{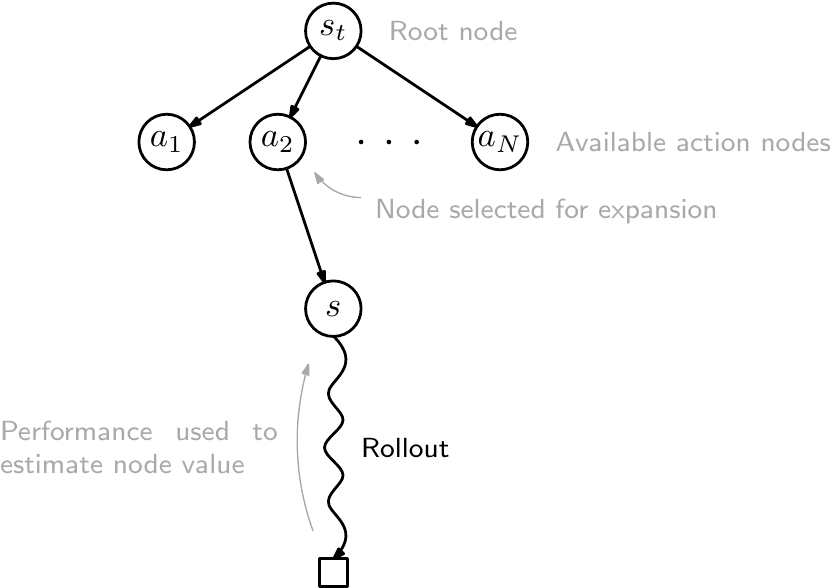}
\caption{Illustration of MCTS. An adequate exploration policy is used to determine the next node selected for expansion. Rollouts (i.e., sampled trajectories) from the expanded nodes are used to build estimates about how good the nodes are.}
\label{Fig:MCTS}
\end{figure}

In this paper, we extend the idea behind MCTS to control problems with a continuous action space. The main challenge lies in ensuring efficient exploration. Standard MCTS approaches assume that the decision-maker has only a finite set of possible actions available and typically use multi-armed bandit algorithms to guide exploration. Hence, our first contribution is a novel multi-armed bandit algorithm for continuous action spaces.

Several works have addressed multi-armed bandits with continuous action spaces \cite{munos14ftml}. Most such approaches consider an action space that corresponds to a hypercube in $\IR^P$, i.e., a set
\begin{equation*}
\X=\prod_{p=1}^P[a_p,b_p],\qquad\text{for $a_p,b_p\in\IR^P$.}	
\end{equation*}

Multi-armed bandit problems in continuous action settings generally apply some form of {\em hierarchical partitioning} \cite{munos14ftml}. Hierarchical partitioning successively splits $\X$ into smaller hypercubes in such a way that the resulting partition can be represented as a tree (see Fig.~\ref{Fig:HP}). Examples of such algorithms include {\em deterministic optimistic optimization} (DOO) \cite{munos11nips}, {\em hierarchical optimistic optimization} (HOO) \cite{bubeck09nips}, {\em stochastic simultaneous optimistic optimization} (StoSOO) \cite{munos14ftml}, {\em truncated hierarchical optimistic optimization} (T-HOO) \cite{bubeck11jmlr}, or {\em polynomial hierarchical optimistic optimization} (Poly-HOO) \cite{mao20nips}. 

\begin{figure}[!tb]
\centering
\includegraphics{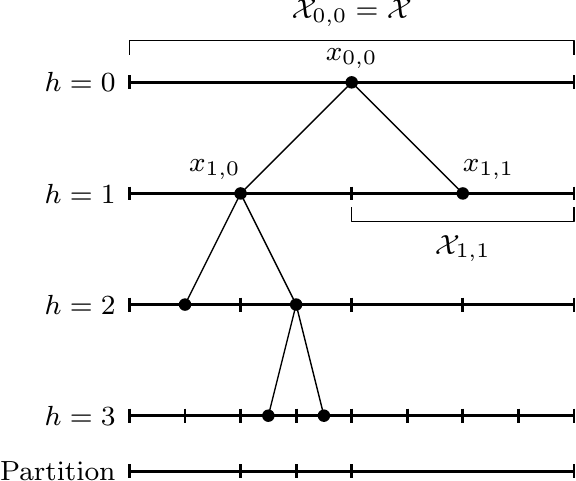}
\caption{Hierarchical partitioning. The set $\X$ is successively split in uniform subsets, and the resulting partition can be represented as a $K$-ary tree. The sets in level $h$ are referred as ``cells'' and denoted as $\X_{h,i}$, with $0\leq i\leq K^h-1$. Each set $\X_{h,i}$ is represented by a specific element $x_{h,i}$, referred as the {\em center} of $\X_{h,i}$.}
\label{Fig:HP}
\end{figure}

All the methods above also build on the principle of {\em optimism in the face of uncertainty}, widely used in the multi-armed bandit literature \cite{auer02ml} and MCTS. The most adequate for the settings considered in this paper are HOO and its truncated and polynomial variants. Unlike DOO, HOO admits stochastic rewards. Unlike StoSOO, it is an anytime algorithm, a desirable property when used within MCTS. Unfortunately, HOO exhibits quadratic runtime, which is inconvenient for MCTS. Poly-HOO and T-HOO are faster, but exhibit a worse performance in terms of cumulative regret. 

This paper contributes LD-HOO, a modified version of HOO that is computationally faster but retains HOO's guarantees in terms of cumulative regret. We also contribute an analysis of LD-HOO in terms of cumulative regret. Finally, we propose LD-HOOT (LD-HOO for trees), a novel MCTS algorithm for problems with continuous action spaces that uses LD-HOO for efficient exploration. We illustrate the application of LD-HOOT in several well-known optimal control problems.


\section{Background}%
\label{Sec:Background}

This section provides an overview of key concepts regarding the decision problems addressed in this paper and sets the notation used throughout.

\subsection{The (stochastic) multi-armed bandit problem}

We consider the problem of a decision maker that must, at each time step $t=1,2,\ldots$, select a (possibly random) action $X_t$ from a set $\X$ of possible actions. Upon selecting an action $X_t=x$, the decision maker receives a (possibly random) reward $Y_t$ with mean value $f(x)$. The process then repeats. 

We assume that $Y_t\in[0,1]$ and the distribution governing $Y_t$ is stationary and independent of the past given the action $X_t$. The function $f:\X\to\IR$ is unknown to the decision maker. The goal of the decision maker is to select the actions $X_t,t=1,2,\ldots$, to maximize the total reward received. 

A {\em policy} is a sequence $\pi=\set{\pi_t,\;t=1,2,\ldots}$, where each $\pi_t$ maps the history of the agent up to time step $t$ to a distribution over $\X$. The {\em cumulative regret} of a policy $\pi$ at time step $n$ is defined as
\begin{equation*}
\hat{R}_n=nf(x^*)-\sum_{t=1}^nY_t,
\end{equation*}
where $x^*=\argmax_{x\in\X}f(x)$, and the {\em cumulative pseudo-regret} is
\begin{equation*}
R_n=nf(x^*)-\sum_{t=1}^nf(x_t).
\end{equation*}
The cumulative regret measures how much the agent's policy lost in terms of cumulative reward compared with the optimal action. 

\subsection{Markov decision problems}

Markov decision problems (MDPs) extend multi-armed bandit problems by considering that the reward at each time step $t$, $Y_t$, is no longer independent of the past given only the action $X_t$. In an MDP, a decision-maker observes, at each time step $t=1,2,\ldots$, the {\em state} of a Markov chain $\set{S_t,t=1,2,\ldots}$, where each $S_t$ takes values in some set $\S$. It must then select a (possibly random) action $X_t$ from a set $\X$ of possible actions. Upon selecting an action $X_t=x$ when the chain is in state $S_t=s$, the decision maker receives a (possibly random) reward $Y_t$ with mean value $f(s,x)$. The Markov chain then transitions to a new state $S_{t+1}=s'$ according to the transition probabilities
\begin{equation*}
P(s,x,s')=\PP{S_{t+1}=s'\mid S_t=s,X_t=x}.
\end{equation*}

As before, we assume that the distribution governing $Y_t$ is stationary and independent of the past given $X_t$ and $S_t$. As before, the function $f$ is unknown to the learner. The goal of the decision maker is to select the actions $X_t,t=1,\ldots$, to maximize the value
\begin{equation}\label{Eq:TDR}
J=\EE{\sum_{t=1}^\infty\gamma^{t-1}f(S_t,X_t)},
\end{equation}
where $\gamma$ is a discount factor. MDPs are stochastic optimal control problems and find many applications in artificial intelligence, operations research and optimal control \cite{puterman05}.


\section{Limited Depth Hierarchical Optimistic Optimization}%
\label{Sec:LD-HOO}

This section introduces our first contribution, the LD-HOO algorithm. In terms of structure, LD-HOO is similar to HOO \cite{bubeck09nips}. LD-HOO also uses hierarchical partitioning to successively build a binary tree like the one depicted in Fig.~\ref{Fig:HP}. The key difference is that, in LD-HOO, the depth of the tree depth is limited to a maximum depth, $\hat{H}$. By limiting the tree's depth, we introduce significant computational savings without sacrificing performance.

At a very high-level, LD-HOO proceeds as follows. At each time step $t$, LD-HOO navigates the current partition tree, $\T_t$, from the root to some leaf node $X_{h_t,i_t}$ using an ``optimistic'' path (we make this notion precise ahead). It then selects an arbitrary action $x_t\in X_{h_t,i_t}$ and observes the corresponding reward, $y_t$. If $h_t<\hat{H}$ the node $X_{h_t,i_t}$ is expanded---i.e., its children nodes are added to the tree. Finally, the value of the nodes in the tree is updated accordingly.

To properly formalize the LD-HOO algorithm, we introduce some auxiliary notation. We denote by $\T_t$  the partition tree at time step $t$. The sets at each level $h$ of the tree are generally referred as ``cells'', and denoted as $\X_{h,i}$, with $0\leq i\leq 2^h-1$. Each cell $\X_{h,i}$ originates, in level $h+1$, two children cells, $\X_{h+1,2i-1}$ and $\X_{h+1,2i}$. We denote by $\C(h,i)$ the children of $\X_{h,i}$, i.e.,
\begin{equation*}
\C(h,i)=\set{(h+1,2i-1),(h+1,2i)},
\end{equation*}
and by
\begin{equation*}
T_{h,i}(t)=\sum_{\tau=1}^t\II{X_\tau\in\X_{h,i}}	
\end{equation*}
the number of times that an action from $\X_{h,i}$ was selected up to time step $t$. The estimated reward associated with a node $(h,i)$ at time step $t$ can be computed as
\begin{equation*}
\hat{\mu}_{h,i}(t)=\frac{1}{T_{h,i}(t)}\sum_{\tau=1}^ty_\tau\II{X_\tau\in\X_{h,i}}.
\end{equation*}
Finally, we write $\L(\T_t)$ to denote the set of leaf nodes of the tree $\T_t$.

The ``optimism in the face of uncertainty'' principle comes into play by associating, with each node $(h,i)$ in the tree, an upper confidence bound $b_{h,i}$ that is used to guide the traversal of the tree. The computation of this upper confidence bound relies on two key assumptions. 

The first assumption concerns the existence of a semi-metric in $\X$.

\begin{assumption}\label{Ass:Semi-metric}
The action space $\X$ is equipped with a semi-metric ${\ell\!:\X\!\times\!\X\!\to\!\IR}$. 	
\end{assumption}
Recall that $\ell$ is a semi-metric if, for all $x,x'\in\X$, $\ell(x,x')\geq 0$, $\ell(x,x')=\ell(x',x)$ and $\ell(x,x')=0$ if and only if $x=x'$. We can now define the diameter of a set $U\subset\X$ as 
\begin{equation*}
\diam(U)\defin\sup_{x,x'\in U}\ell(x,x').	
\end{equation*}
An $\ell$-open ball centered around $x\in\X$ with radius $\eps$ is defined as
\begin{equation*}
\B(x,\eps)\defin\set{x'\in\X\mid \ell(x,x')<\eps}.
\end{equation*}

The second assumption is concerned with hierarchical partitioning. 

\begin{assumption}\label{Ass:Hierarchy}
There are constants $\nu_1,\nu_2>0$ and $0<\rho<1$ such that, for all $h\geq 0$ and all $i\in\set{0,\ldots,2^h-1}$, there is $x_{h,i}\in\X_{h,i}$ such that
\begin{gather*}
\diam(\X_{h,i})\leq\nu_1\rho^h,\\
\B(x_{h,i},\nu_2\rho^h)\subset\X_{h,i},\\
\B(x_{h,i},\nu_2\rho^h)\cap\B(x_{h,j},\nu_2\rho^h)=\emptyset,\qquad\text{for $i\neq j$.}
\end{gather*}
\end{assumption}

Assumption~\ref{Ass:Hierarchy} ensures that the construction of the hierarchical partition and the semi-metric $\ell$ are such that the diameter of the cells in level $h$ decreases uniformly, and a suitable center can be found for each cell. The choice of partitioning method thus suggests a semi-metric, while the choice of $\ell$ limits the way that the partition can be done \cite{bubeck11jmlr}.

\vspace{2ex}

From the above assumptions we can now compute an upper confidence bound for the values of $f$ in a cell $(h,i)$ of the tree at time step $t$ as
\begin{equation}\label{Eq:Bound-u}
u_{h,i}(t)=\hat{\mu}_{h,i}(t)+\sqrt{\frac{2\log t}{T_{h,i}(t)}}+\nu_1\rho^h,
\end{equation}
where $\nu_1$ and $\rho$ are the constants from Assumption~\ref{Ass:Hierarchy}. It is interesting to note the similarity between the bound in \eqref{Eq:Bound-u} and the upper confidence bound in the well-known UCB1 algorithm~\cite{auer02ml}. The additional term, $\nu_1\rho^h$, accounts for the variation of $f$ within $\X_{h,i}$. 

The actual value used to traverse the tree is referred as the $b$-value, and is computed from the upper confidence bound in \eqref{Eq:Bound-u} as
\begin{equation}\label{Eq:b-value}
	b_{h,i}(t)=\min\set{u_{h,i}(t),\max_{c\in\C(h,i)}b_c(t)},
\end{equation}
where the $b$-value of a non-visited node is set to $+\infty$. These values can be computed recursively from the leaves to the root.

\begin{algorithm}[!htb]
    \caption{Limited Depth Hierarchical Optimistic Optimization (LD-HOO).}
    \label{Alg:LD-HOO}
\begin{algorithmic}[1]
    \Statex{\bf Initialization:}
    \State $\mathcal{T}_1 = \{(0,0)\}$
    \State $u_{0,0} \leftarrow +\infty$
    \State $b_{0,0} \leftarrow +\infty$
    \Statex
    \Statex{\bf Main loop:}
    \For{$t=1,\ldots,n$}
    	\State Compute $b$-values for all nodes in $\T_t$ using \eqref{Eq:b-value} 
	    \State Set $(h,i)=(0,0)$ \label{Step:Traverse-start} 
	    \Comment{Start in the root node}
        \While{$(h,i)\not\in\L(\T_t)$}
            \State $(h,i)\leftarrow\argmax_{c\in\C(h,i)}b_c(t)$ 
            \Comment{Traverse tree ``optimistically''}
        \EndWhile \label{Step:Traverse-end}
        \State $(h_t,i_t)\leftarrow(h,i)$ \label{Step:Node-selection}
        \Comment{Node selection}
        \State Sample action $x_t$ arbitrarily from $\X_{h_t,i_t}$
        \Comment{Action sampling}
        \State Observe reward $y_t$
        \If{$h<\hat{H}$} \label{Step:Expansion-start}
            \State $\mathcal{T}_{t+1}=\mathcal{T}_t\cup\mathcal{C}(h_t,i_t)$ 
            \Comment{Expand node $(h_t,i_t)$}
        \EndIf \label{Step:Expansion-end}
    \EndFor
    \Statex
    \Statex{\bf Final action selection:}
    \State {\bf Return $x_n\in\X_{h^*_n,i^*_n}$} \label{Step:Final-selection}
\end{algorithmic}
\end{algorithm}

LD-HOO is summarized in Algorithm~\ref{Alg:LD-HOO} for a horizon of $n$ steps. Lines~\ref{Step:Traverse-start}-\ref{Step:Traverse-end} correspond to the traversal of the partition tree: at node $(h,i)$, the algorithm moves to the child of $(h,i)$ with the largest $b$-value. The traversal stops at a leaf node $(h_t,i_t)$ (line~\ref{Step:Node-selection}) and an action is then selected from $\X_{h_t,i_t}$. Then, if $h_t$ is below the limit depth $\hat{H}$, the node $(h_t,i_t)$ is expanded and its children added to the tree (lines~\ref{Step:Expansion-start}-\ref{Step:Expansion-end}). Finally, upon completion, the algorithm returns an action $x_n$ from $\X_{h^*_n,i^*_n}$ where
\begin{equation*}
(h^*_n,i^*_n)=\argmax_{c\in\T_n}\hat{\mu}_c(n)	.
\end{equation*}

\subsection{Regret analysis for LD-HOO}

In this section we provide a regret analysis for LD-HOO. Our analysis closely follows that of Bubeck et al. \cite{bubeck11jmlr}, adapted to take into consideration the fact that LD-HOO considers limited-depth partition trees. 

Our analysis requires one additional assumption regarding the behavior of the mean payoff function, $f$, around its global maximizer, $x^*$.

\begin{assumption}\label{Ass:Smooth-payoff}
The mean payoff function, $f$, is such that, for all $x,x'\in\X$,
\begin{equation*}
f(x^*)-f(x')\leq f(x^*)-f(x)+\max\set{f(x^*)-f(x),\ell(x,x')}.
\end{equation*}
\end{assumption}
Assumption~\ref{Ass:Smooth-payoff} states that there is no sudden decrease in the mean payoff function around $x^*$. Also, setting $x=x^*$, Assumption~\ref{Ass:Smooth-payoff} actually establishes $f$ as locally Lipschitz around $x^*$ \cite{bubeck09nips}. The set of {\em $\eps$-optimal actions}, $\X_\eps$, is defined as
\begin{equation*}
\X_\eps\defin\set{x\in\X\mid f(x)\geq f(x^*)-\eps}.	
\end{equation*}
We have the following definition.

\begin{definition}[$\eps$-packing number]
Given a set $\X$ with a semi-metric $\ell$, the {$\eps$-packing number} of $\X$ with respect to $\ell$ is denoted as $\N(\X,\ell,\eps)$ and defined as the largest integer $K$ for which there are $x_1,\ldots,x_K\in\X$ such that $\B(x_k,\eps)\subset\X$ and $\B(x_k,\eps)\cap\B(x_{k'},\eps)=\emptyset$, for all $k,k'\in\set{1,\ldots,K}$.
\end{definition}

In other words, $\N(\X,\ell,\eps)$ is the maximum number of disjoint $\eps$-balls contained in $\X$. We now introduce the notion of near-optimality dimension, which describes the size of the sets $\X_{c\eps}$ as a function of $\eps$.

\begin{definition}[Near-optimality dimension] Given constants $c>0$ and $\eps_0>0$, the {\em $(c,\eps_0)$-near-optimality dimension} of $f$ with respect to $\ell$ is defined as
\begin{equation*}
d\defin\inf\set{\delta\geq 0\mid \exists C>0\text{ s.t. }\forall \eps\leq\eps_0,\N(\X_{c\eps},\ell,\eps)\leq C\eps^{-\delta}},	
\end{equation*}
with the convention that $\inf\emptyset=+\infty$. 
\end{definition}
In other words $d$ is the smallest value such that the maximum number of disjoint $\eps$-balls that ``fit'' in $\X_{c\eps}$ is no larger than a $C\eps^{-d}$. A small near-optimality dimension implies that there is a small number of neighborhoods where the value of $f$ is close to its maximum. 

We now introduce our main result.

\begin{theorem}\label{Theo:Regret}
Suppose that Assumptions~\ref{Ass:Semi-metric} through \ref{Ass:Smooth-payoff} are satisfied for semi-metric $\ell$ and constants $\nu_1$, $\nu_2$ and $\rho$. Let $d<+\infty$ be the $(4\nu_1/\nu_2,\nu_2)$-near-optimality dimension of $f$ with respect to $\ell$. Then, for all horizons $n>0$ and depth limit $\hat{H}>0$, the expected cumulative regret of {LD-HOO} verifies:
\begin{equation*}
\EE{R_n}\in O\left(\rho^{\hat{H}}n+\rho^{-\hat{H}(d+1)}\log n+\hat{H}\max\{\rho^{\hat{H}(1-d)},\hat{H}\}\right).
\end{equation*}
\end{theorem}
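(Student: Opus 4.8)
The plan is to follow the regret analysis of Bubeck et al.~\cite{bubeck11jmlr}, tracking carefully where the depth truncation at $\hat{H}$ alters the argument. First I would decompose the pseudo-regret over the tree by writing, for each step $t$, $f(x^*)-f(x_t)\leq\Delta_{h_t,i_t}+\nu_1\rho^{h_t}$, where $\Delta_{h,i}\defin f(x^*)-\sup_{x\in\X_{h,i}}f(x)$ is the suboptimality gap of cell $(h,i)$ and the $\nu_1\rho^{h_t}$ term bounds the within-cell variation of $f$ via Assumption~\ref{Ass:Hierarchy} together with the local Lipschitz property implied by Assumption~\ref{Ass:Smooth-payoff}. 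This separates the regret into an \emph{approximation} part, governed by how finely the tree can resolve $x^*$, and an \emph{estimation} part, governed by how often suboptimal cells are selected.

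The next step is to partition, at every level $0\leq h\leq\hat{H}$, the cells into an ``optimal'' set $\I_h$ of those whose suboptimality satisfies $\Delta_{h,i}\leq c\,\nu_1\rho^h$ for a suitable constant $c$, and the remaining suboptimal cells. The cardinality of $\I_h$ is controlled by the near-optimality dimension: the balls $\B(x_{h,i},\nu_2\rho^h)$ are pairwise disjoint by Assumption~\ref{Ass:Hierarchy} and, for optimal cells, contained in $\X_{4\nu_1\rho^h}$, so evaluating the $(4\nu_1/\nu_2,\nu_2)$-near-optimality dimension at $\eps=\nu_2\rho^h$ gives $\abs{\I_h}\leq C\,\rho^{-hd}$. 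In parallel, a Hoeffding-type concentration bound shows that, outside an event of probability of order $t^{-3}$, the quantity $u_{h,i}(t)$ in \eqref{Eq:Bound-u} is a genuine upper bound on $\sup_{x\in\X_{h,i}}f(x)$; this legitimizes the optimistic traversal and lets me bound the expected number of pulls of a suboptimal cell at depth $h$ by $O(\rho^{-2h}\log n)$, mirroring the UCB1 analysis.

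Assembling these pieces produces the three terms of the stated bound. The $\rho^{\hat{H}}n$ term is the unavoidable resolution error: once the explored leaves sit at the maximum depth $\hat{H}$, every one of the (up to) $n$ plays in an optimal cell still incurs per-step regret $O(\nu_1\rho^{\hat{H}})$, because the tree cannot refine below diameter $\nu_1\rho^{\hat{H}}$. The $\rho^{-\hat{H}(d+1)}\log n$ term is the exploration cost at the finest level: there are $O(\rho^{-\hat{H}d})$ near-optimal cells at depth $\hat{H}$, and each contributes regret $O(\rho^{-\hat{H}}\log n)$, being pulled $O(\rho^{-2\hat{H}}\log n)$ times at a per-pull cost of $O(\rho^{\hat{H}})$; summing the analogous contributions $\sum_{h}\rho^{-h(d+1)}\log n$ over all levels is geometric and dominated by $h=\hat{H}$. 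The final $\hat{H}\max\{\rho^{\hat{H}(1-d)},\hat{H}\}$ term collects the overhead incurred along the internal nodes of the at most $\hat{H}$ levels, the $\max$ reflecting whether the geometric sum of the counts $\abs{\I_h}$ is increasing or decreasing in $h$, i.e.\ whether $d>1$ or $d<1$.

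I expect the main obstacle to be the bookkeeping in the last step: unlike the infinite-depth HOO analysis, where the sums over depth converge cleanly, here the truncation forces me to separate the contribution of the deepest level (which is never expanded, so its cells absorb all remaining pulls) from that of the internal levels, and to verify that this truncation does not inflate the regret beyond the explicit $\rho^{\hat{H}}n$ resolution term. Getting the dependence on $\hat{H}$ right in the lower-order term, and confirming that optimizing $\hat{H}$ recovers the asymptotic rate of the original HOO, is the delicate part.
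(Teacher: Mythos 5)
Your overall plan is the same as the paper's (both adapt the HOO analysis of Bubeck et al.\ \cite{bubeck11jmlr} to the truncated tree), but two steps in your sketch would fail as written.

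First, your derivation of the $\rho^{-\hat{H}(d+1)}\log n$ term rests on applying the UCB-style pull-count bound $O(\rho^{-2\hat{H}}\log n)$ to the \emph{near-optimal} cells at depth $\hat{H}$. No such bound exists for near-optimal cells---the cell containing $x^*$ is typically played $\Theta(n)$ times---and this contradicts your own treatment of the first term, where you (correctly) let those same cells absorb up to $n$ plays at cost $O(\nu_1\rho^{\hat{H}})$ each. The pull-count bound requires a gap $r_{h,i}>\nu_1\rho^h$, i.e.\ suboptimality exceeding the cell's diameter scale. The paper's device is to introduce, at each level $h$, the set $\J_h$ of cells that are \emph{not} $2\nu_1\rho^h$-optimal but whose \emph{parents} lie in $\I_{h-1}$: for these the gap condition holds, so $\EE{T_{h,i}(n)}\leq 8\log n/(\nu_1^2\rho^{2h})+2(\hat{H}+1)$; their per-pull regret (and that of every descendant, since $T_{h,i}$ counts all pulls in the subtree) is still at most $4\nu_1\rho^{h-1}$, via the near-optimal parent together with Assumption~\ref{Ass:Smooth-payoff}; and $\abs{\J_h}\leq 2\abs{\I_{h-1}}=O(\rho^{-hd})$. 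Charging every pull in a suboptimal subtree to its topmost $\J_h$ ancestor is also what spares you from counting the exponentially many ($2^h$ per level) ``remaining suboptimal cells'' that your partition into $\I_h$ versus ``the rest'' leaves dangling. With this fix, the level-$h$ contribution is $\rho^{-hd}\cdot\rho^{-2h}\log n\cdot\rho^{h}$, and the geometric sum dominated by $h=\hat{H}$ that you wrote goes through.

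Second, your sketch cannot produce the third term $\hat{H}\max\{\rho^{\hat{H}(1-d)},\hat{H}\}$, because you quote the pull-count bound as $O(\rho^{-2h}\log n)$ with no additive part. In the depth-limited setting the correct bound is $8\log n/(\nu_1^2\rho^{2h})+2(\hat{H}+1)$: when bounding the probability that a suboptimal node is traversed, one must union-bound the failure event $u_{h,i^*_h}(t)\leq f(x^*)$ over the entire truncated optimal path $h\in\{k+1,\ldots,\hat{H}\}$ (using that $b_{\hat{H},i^*_{\hat{H}}}(t)=u_{\hat{H},i^*_{\hat{H}}}(t)$ at the depth where expansion stops), which contributes $\hat{H}\sum_t t^{-3}=O(\hat{H})$ extra visits per $\J_h$ node. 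Multiplying this $O(\hat{H})$ by the per-pull cost $O(\rho^h)$ and the counts $O(\rho^{-hd})$, and summing over $h$, gives $\hat{H}\sum_{h}\rho^{h(1-d)}=O\bigl(\hat{H}\max\{\rho^{\hat{H}(1-d)},\hat{H}\}\bigr)$. The ``overhead along internal nodes'' you invoke---the single visits to the nodes of $\T^2=\cup_{h<\hat{H}}\I_h$---only yields $\sum_h\rho^{h(1-d)}$, i.e.\ the same $\max$ \emph{without} the leading factor $\hat{H}$. So you have identified the right $\max$ dichotomy ($d>1$ versus $d\leq1$), but the $\hat{H}$ prefactor has a different origin: the depth-dependent slack in the concentration step, which is precisely the place where the limited-depth analysis departs from HOO's and which your proposal omits.
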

Due to space limitations, we provide only a sketch of the proof, and refer to the supplementary material for a complete proof.

\begin{proof}[Proof sketch]
The proof proceeds in four steps. We start by providing a bound on the number of times that each ``suboptimal'' node in the tree is selected. In a second step we break down the value of $\EE{R_n}$ into three components. In the third step we show that two of the regret components are naturally bounded as a consequence of our assumptions, and the third depends on the number of visits to the nodes in the tree (which was bounded in the first step). The fourth step wraps the proof by combining the intermediate results into a final regret bound.

\paragraph*{First step.} For a node $(h,i)$ in $\T_n$, let $f_{h,i}^*=\sup_{x\in\X_{h,i}}f(x)$, and define the set 
\begin{equation*}
\I_h=\set{(h,i)\mid f^*_{h,i}\geq f(x^*)-2\nu_1\rho^h},
\end{equation*}
i.e., $\I_h$ is the set of $2\nu_1\rho^h$-optimal nodes at depth $h$. Let $\J_h$ be the set of nodes at depth $h$ that are not in $\I_h$ but whose parents are in $\I_{h-1}$. We can bound the expected number of times that an action from a node $(h,i)\in\J_h$ is selected up to time step $n$ as 
\begin{equation}\label{Eq:Bound-Jh}
    \EE{T_{h,i}(n)}\leq\frac{8\log n}{\nu_1^2\rho^{2h}}+2(\hat{H}+1).
\end{equation}

Furthermore, we can bound the number of nodes in $\I_h$ as
\begin{equation}\label{Eq:Ih-cardinality}
\abs{\I_h}\leq C(\nu_2\rho^h)^{-d}.
\end{equation}

\paragraph*{Second step.} We partition the nodes of $\T_n$ into three disjoint sets, $\T^1$, $\T^2$, and $\T^3$, such that $\T^1\cup\T^2\cup\T^3=\T_n$. In particular, 
\begin{itemize}
\item $\T^1$ is the set of nodes at depth $\hat{H}$ that are $2\nu_1\rho^{\hat{H}}$-optimal, i.e., $\T^1=\I_{\hat{H}}$.
\item $\T^2$ is the sets of all $2\nu_1\rho^h$-optimal nodes, for all depths $h\neq\hat{H}$, i.e., $\T^2=\cup_{h=0}^{\hat{H}-1}\I_h$.
\item $\T^3$ is the set set of all remaining nodes, i.e., $\cup_{h=1}^{\hat{H}}\J_h$ and corresponding descendant nodes.
\end{itemize}
We now define the regret $R_{n,k}$ of playing actions in $\T^k, k=1,2,3$, as 
\begin{equation}
    R_{n,k} = \sum_{t=1}^n(f(x^*)-f(X_t))\II{(H_t,I_t)\in\T^k}
\end{equation}
where $f(X_t)$ is the expected reward collected at time step $t$ given the action $X_t$ and $(H_t,I_t)$ is the (random) node selected in that same time step. It follows that
\begin{equation}\label{Eq:Regret-decomposed}
\EE{R_n}=\EE{R_{n,1}}+\EE{R_{n,2}}+\EE{R_{n,3}}.
\end{equation}

\paragraph*{Third step.} We now bound each of the terms in \eqref{Eq:Regret-decomposed} to establish a bound for the total cumulative regret after $n$ rounds.

Starting with $R_{n,1}$, in the worst case we have that
\begin{equation}\label{Eq:Bound-R1}
\EE{R_{n,1}}\leq4n\nu_1\rho^{\hat{H}}.
\end{equation}

In turn, the nodes $(h,i)\in\T^2$ are $2\nu_1\rho^h$-optimal. This means that
\begin{equation*}
\EE{R_{n,2}}\leq\sum_{t=1}^n4\nu_1\rho^{H_t}\II{(H_t,I_t)\in\T^2}
\end{equation*}
However, each node $(h,i)\in\T^2$ is selected only once and then expanded---only nodes at depth $\hat{H}$ can be selected multiple times. As such, we can use the bound in \eqref{Eq:Ih-cardinality} to get
\begin{equation}\label{Eq:Bound-R2}
\EE{R_{n,2}}
  \leq\sum_{h=0}^{\hat{H}-1}4\nu_1\rho^h\abs{\I_h}
  \leq4C\nu_1\nu_2^{-d}\sum_{h=0}^{\hat{H}-1}\rho^{h(1-d)}
\end{equation}

Finally, let us consider the nodes in $\T^3$. Each node $(h,i)\in\J_h$ is the child of a node in $\I_{h-1}$ and, as such, $f(x^*)-f^*_{h,i}\leq4\nu_1\rho^{h-1}$. Additionally, $\abs{\J_h}\leq 2\abs{\I_{h-1}}$, with equality if $\I_{h-1}$ was fully expanded and all the direct children of the nodes $(h-1,i)\in\I_h$ are in $\T_n$. 
 
Using \eqref{Eq:Bound-Jh}, we can now bound $\EE{R_{n,3}}$ as

\begin{align}
\nonumber%
\EE{R_{n,3}}
  &\leq\sum_{h=1}^{\hat{H}}4\nu_1\rho^{h-1}\sum_{(h,i)\in\J_h}\EE{T_{h,i}(n)}\\
\nonumber%
  &\leq\sum_{h=1}^{\hat{H}}4\nu_1\rho^{h-1}\abs{\J_h}\left(\frac{8\log n}{\nu_1^2\rho^{2h}}+2(\hat{H}+1)\right)\\
\nonumber%
  &\leq\sum_{h=1}^{\hat{H}}8\nu_1\rho^{h-1}\abs{\I_{h-1}}\left(\frac{8\log n}{\nu_1^2\rho^{2h}}+2(\hat{H}+1)\right)\\ 
\label{Eq:Bound-R3}%
  &\leq8C\nu_1\nu_2^{-d}\sum_{h=0}^{\hat{H}-1}\rho^{h(1-d)}\left(\frac{8\log n}{\nu_1^2\rho^{2h}}+2(\hat{H}+1)\right).
\end{align}

\paragraph*{Fourth step.} We now combine the bounds~\eqref{Eq:Bound-R1}, \eqref{Eq:Bound-R2} and \eqref{Eq:Bound-R3} to get
\begin{equation}\label{Eq:Combined-bound}
\EE{R_n}
  \leq4n\nu_1\rho^{\hat{H}}+4C\nu_1\nu_2^{-d}\sum_{h=0}^{\hat{H}-1}\rho^{h(1-d)}\left(\frac{8\log n}{\nu_1^2\rho^{2h}}+4\hat{H}+5\right)
\end{equation}
Rewriting \eqref{Eq:Combined-bound} using asymptotic notation, we finally get
\begin{equation*}
\EE{R_n}\in O\left(n\rho^{\hat{H}}+\log n\rho^{-\hat{H}(1+d)}+\hat{H}\max\{\rho^{\hat{H}(1-d)},\hat{H}\}\right),
\end{equation*}
and the proof is complete.\qed
\end{proof}

We get the following corollary to Theorem~\ref{Theo:Regret}.

\begin{corollary}\label{Cor:LDHOO-bound}
Under the conditions of Theorem~\ref{Theo:Regret}, selecting $\hat{H}$ so that
\begin{equation*}
\rho^{\hat{H}}=c\left(\frac{n}{\log n}\right)^{-\frac{1}{d+2}},
\end{equation*}
for some constant $c>0$, we get
\begin{equation}\label{Eq:Asymptotic-regret}
\EE{R_n}\in O\left(n^{\frac{d+1}{d+2}}(\log n)^{\frac{1}{d+2}}\right).
\end{equation}
\end{corollary}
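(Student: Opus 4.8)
The plan is to substitute the prescribed depth limit directly into the three-term bound of Theorem~\ref{Theo:Regret} and check that the first two terms already realize the claimed rate while the third is asymptotically subdominant. For brevity write $r=\rho^{\hat{H}}=c(n/\log n)^{-1/(d+2)}$. The first step is to evaluate the leading term $\rho^{\hat{H}}n=rn$: a direct computation gives $rn=c\,n^{(d+1)/(d+2)}(\log n)^{1/(d+2)}$, which is exactly the target rate. For the second term I would write $\rho^{-\hat{H}(d+1)}=r^{-(d+1)}=c^{-(d+1)}(n/\log n)^{(d+1)/(d+2)}$ and multiply by $\log n$; collecting powers of $n$ and $\log n$ shows this term equals $c^{-(d+1)}n^{(d+1)/(d+2)}(\log n)^{1/(d+2)}$ as well, so it matches the first.

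The third step is to control the remaining term $\hat{H}\max\{\rho^{\hat{H}(1-d)},\hat{H}\}$, which first requires expressing $\hat{H}$ itself in terms of $n$. Taking logarithms in the defining relation and using $0<\rho<1$ gives $\hat{H}=\Theta(\log(n/\log n))=\Theta(\log n)$. I would then split into cases according to which argument attains the maximum. When $d>1$, the factor $\rho^{\hat{H}(1-d)}=r^{1-d}=c^{1-d}(n/\log n)^{(d-1)/(d+2)}$ grows polynomially in $n$ and dominates $\hat{H}$, so the term is $O\bigl(\log n\cdot(n/\log n)^{(d-1)/(d+2)}\bigr)=O\bigl(n^{(d-1)/(d+2)}(\log n)^{3/(d+2)}\bigr)$; since $(d-1)/(d+2)<(d+1)/(d+2)$, its strictly smaller power of $n$ makes it dominated by the first two terms. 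When $d\leq 1$, the maximum equals $\hat{H}$, so the term is $\hat{H}^2=O\bigl((\log n)^2\bigr)$, which is again dominated because the leading rate carries a positive power of $n$.

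Combining the three steps, every term is $O\bigl(n^{(d+1)/(d+2)}(\log n)^{1/(d+2)}\bigr)$, yielding the stated bound. The computations are routine; the only point demanding care is the third term, where one must both pin down $\hat{H}=\Theta(\log n)$ and perform the case split on the maximum to confirm that the polynomial-in-$n$ factor is strictly subdominant in the $d>1$ regime.
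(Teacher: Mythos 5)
Your proof is correct and follows exactly the route the paper intends: the paper states the corollary without proof as an immediate consequence of Theorem~\ref{Theo:Regret}, and your direct substitution of $\rho^{\hat{H}}=c(n/\log n)^{-1/(d+2)}$ into the three-term bound, showing the first two terms both equal $\Theta\bigl(n^{(d+1)/(d+2)}(\log n)^{1/(d+2)}\bigr)$ and the third is subdominant via $\hat{H}=\Theta(\log n)$ and the case split on $d$, is precisely the omitted routine verification. No gaps; the computation of the exponents and the dominance arguments in both cases $d>1$ and $d\leq 1$ check out.
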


Corollary~\ref{Cor:LDHOO-bound} establishes that, through an adequate choice of the maximum depth $\hat{H}$, the bound on the cumulative regret of LD-HOO matches that of HOO \cite{bubeck11jmlr}. In the continuation, we establish that both the running time and the size of the partition tree of LD-HOO are better than those of HOO.

\subsection{Time and space complexity of LD-HOO}

The number of nodes in the partition tree generated by LD-HOO is upper bounded by 
\begin{equation*}
\sum_{h=0}^{\hat{H}}2^h\in O(2^{\hat{H}}).	
\end{equation*}
 However, under the conditions of Corollary~\ref{Cor:LDHOO-bound}, $\hat{H}\in O(\log n)$. As such, the space complexity of LD-HOO is $O(n^{\log 2})$. In contrast, the number of nodes in a partition tree generated by HOO is $O(n)$. Thus, the space complexity of LD-HOO is asymptotically better than HOO's.

Similarly, the running time of LD-HOO is proportional to the number of updates to the $b$-values. After $n$ iterations, the total amount of updates is, at most, $n$ times the number of nodes in the tree. As such, the time complexity of LD-HOO is $O(n^{1+\log 2})$, against the $O(n^2)$ time of HOO \cite{bubeck11jmlr}.

\subsection{Comparison with other algorithms}

We conclude this section by performing an empirical comparison between LD-HOO and related algorithms---namely HOO \cite{bubeck09nips}, Poly-HOO \cite{mao20nips} and T-HOO \cite{bubeck11jmlr}. 

In our tests, we use a mean reward function $f(x)=\frac{1}{2}(\sin(13x)\sin(27x)+1)$. The reward received by the decision-maker at each time step is a perturbed version of $f$: upon selecting an action $X_t$, the decision-maker receives a reward $Y_t=f(X_t)+\eps_t$, where $\eps$ is drawn from a Gaussian distribution with zero mean and a standard deviation of $0.05$. We set $\nu_1=1$ and $\rho=0.25$ for all algorithms. For LD-HOO we set $\hat{H}=\lceil\log n\rceil$.%
\footnote{Even though our selected value for $\hat{H}$ does not verify the conditions in Corollary~\ref{Cor:LDHOO-bound}, it empirically performed well and, as such, was used in our experiments.}
We compare in Fig.~\ref{Fig:Results} the performance of the 4 methods, as we vary the horizon between $n=10$ and $n=1,000$. The results portrayed are averages of $10$ independent runs. 

\begin{figure}[!tb]
\centering
\begin{subfigure}[t]{0.49\columnwidth}
  \includegraphics[width=\textwidth]{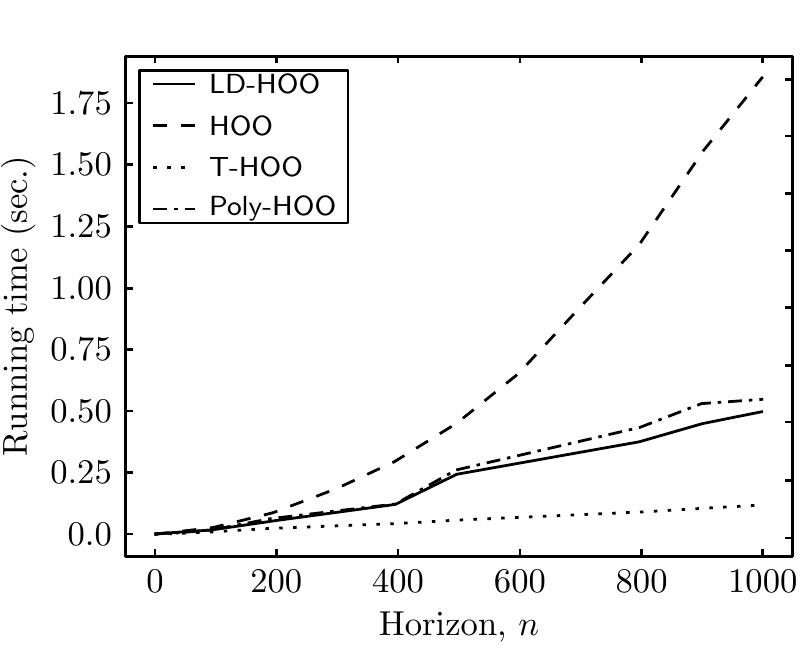}	
  \caption{Running time.}
  \label{Fig:Running-time}
\end{subfigure}\hfill
\begin{subfigure}[t]{0.49\columnwidth}
  \includegraphics[width=\textwidth]{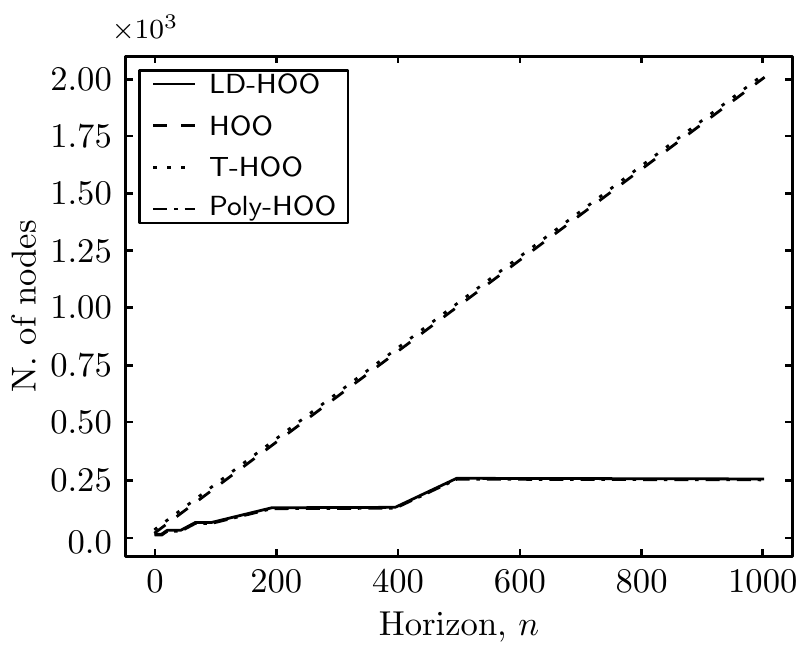}	
  \caption{Number of nodes in the partition tree.}
  \label{Fig:Nodes}
\end{subfigure}\\
\begin{subfigure}[t]{0.5\columnwidth}
  \includegraphics[width=\textwidth]{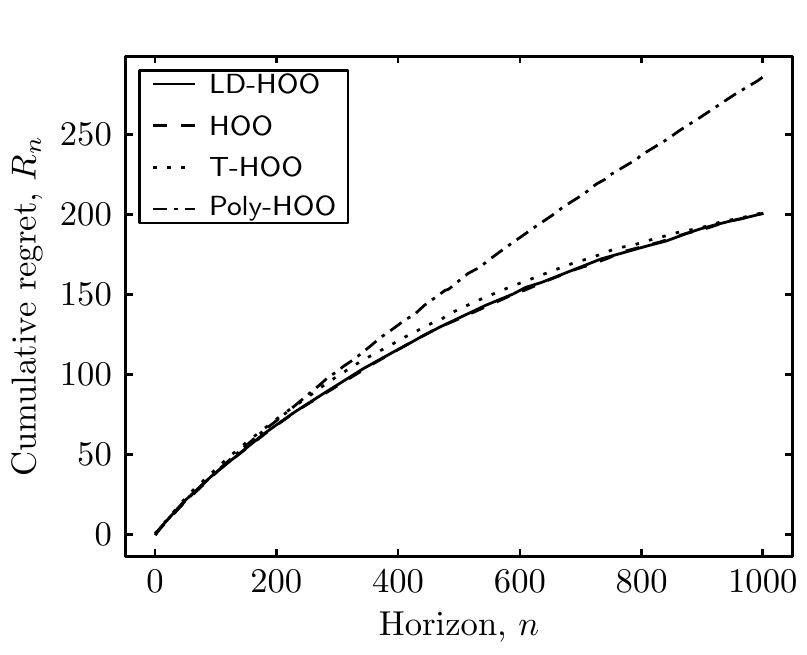}	
  \caption{Cumulative regret.}
  \label{Fig:Regret}
\end{subfigure}
  \caption{Performance of all HOO-based algorithms in terms of time, space and regret, as a function of the horizon, {$n$}.}
  \label{Fig:Results}
\end{figure}

Our results are in accordance with our theoretical analysis. As seen in Figs.~\ref{Fig:Running-time} and \ref{Fig:Nodes}, HOO is the most computationally demanding algorithm, both in terms of time and in terms of the number of nodes. LD-HOO is similar to Poly-HOO in terms of computational complexity but, as seen in Fig.~\ref{Fig:Regret}, it outperforms Poly-HOO in terms of cumulative regret. Finally, LD-HOO is similar to T-HOO in terms of regret; T-HOO exhibits a superior running time, but a worse number of nodes (the line of T-HOO in Fig.~\ref{Fig:Nodes} overlaps that of HOO).

The conclusion from our empirical results is that, overall, LD-HOO achieves similar or better performance than the other algorithms. The only exception is in terms of running time, where T-HOO is noticeably faster. However, since our ultimate goal is to use our algorithm in the context of MCTS, T-HOO is not an option,  since it is not an anytime algorithm \cite{bubeck11jmlr}.

\section{Monte Carlo Tree Search using LD-HOO}%
\label{Sec:LD-HOOT}

We are now in position to introduce LD-HOOT (or LD-HOO for trees), where we combine Monte Carlo tree search with LD-HOO. We consider Markov decision problems (MDPs) where, at each step, the agent/decision-maker observes the current state of the MDP, $S_t$, and must select an action, $X_t$, that maximizes the total discounted reward, as defined in \eqref{Eq:TDR}. Like standard MCTS algorithms, at each step $t$ LD-HOOT builds a tree representation of the MDP similar to that in Fig.~\ref{Fig:MCTS}. To traverse the tree, each state node $s$ is treated as a multi-armed bandit problem, and an action is selected using LD-HOO. When computation time is over, LD-HOOT returns the action selected by LD-HOO at the root node (line~\ref{Step:Final-selection} of Algorithm~\ref{Alg:LD-HOO}).

\begin{figure}[!tb]
\begin{subfigure}[b]{0.48\columnwidth}
\centering
\includegraphics[width=0.5\textwidth]{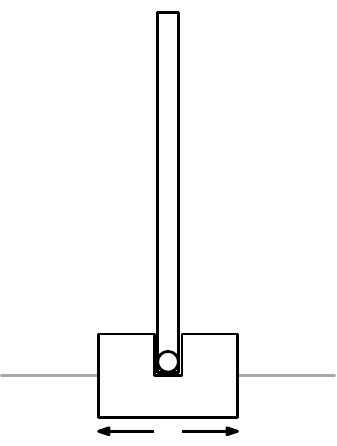}
\caption{The cart pole environment.}	
\end{subfigure} \hfill
\begin{subfigure}[b]{0.48\columnwidth}
\centering
\includegraphics[width=0.5\textwidth]{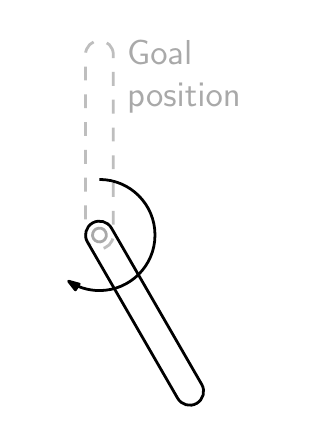}
\caption{The inverted pendulum environment.}	
\end{subfigure} \qquad
\caption{Illustration of the two environments used to evaluate LD-HOOT. The black arrows illustrate the control available: in the cart-pole scenario, the control is performed by applying a force to the cart. In the inverted pendulum, the control is performed by applying a force to the pendulum.}
\label{Fig:Environments}
\end{figure}

To assess the applicability of LD-HOOT, we tested the algorithm in two control problems from the literature. We used benchmark environments from OpenAI's Gym \cite{brockman16arxiv}, namely \texttt{CartPole-v1} and \texttt{Pendulum-v0}, as well as the modified version of cart-pole with increased gravity of Mao et al. \cite{mao20nips}. In the two cart-pole scenarios, the pole is initialized close to the equilibrium position, and the role of the agent is to control the cart to keep the pole (approximately) vertical. In the inverted pendulum scenario, the pendulum is initialized in an arbitrary position above the horizontal line with (possible) non-zero initial velocity, and the role of the agent is to balance the pole to the vertical position.

We used $\nu_1=4$, $\rho=0.25$ and allowed LD-HOOT to have a lookahead limit of $D=50$ actions. In all scenarios, we set $\hat{H}=\lceil\log n\rceil$. In the cart-pole scenario, the agent interacted with the environment for a total of $T=150$ time steps. At each time step, the action of the agent was computed using $n=100$ iterations of the planner. In contrast, in the inverted pendulum scenario, the agent interacted with the environment for a total of $T=100$ time steps. However, since the environment is more difficult, each action was computed from $n=100,400$ iterations of the planner to establish the impact of the increased $n$ in each of the algorithms. The number of actions executed in each environment permits a comparison with the benchmark results in Open AI Gym \cite{leaderboard}. We normalized the rewards to the interval $[0,1]$ to allow an easy comparison with the number of actions executed since, in this case, that is (potentially) the maximum cumulative reward that could be obtained.

\begin{table}[!tb]
\centering
\caption{Average sum of rewards obtained in the four test scenarios.}
\label{Table:Results-hoot}
\begin{tabular}{lcccc}
\toprule
& \sc Cart Pole & \sc Cart Pole IG & \multicolumn{2}{c}{ \sc Inverted Pendulum} \\ 
$n$ & 100 & 100 & 100 & 400 \\ 
\midrule
LD-HOOT & $150.0 \pm 0.0$ & $150.0 \pm 0.0$ & $82.46 \pm 10.40$ & $84.36 \pm  9.00$ \\ 
HOOT & $150.0 \pm 0.0$ & $150.0 \pm 0.0$ & $81.23 \pm 13.41$& $81.21 \pm 11.45$ \\ 
POLY-HOOT & $150.0 \pm 0.0$ & $150.0 \pm 0.0$ & $82.40 \pm 11.24$ & $84.56 \pm  8.79$ \\
\bottomrule
\end{tabular}
\end{table}

Table~\ref{Table:Results-hoot} compares the performance of LD-HOOT against that of HOOT \cite{mansley11icaps} and Poly-HOOT \cite{mao20nips}, the MCTS extensions of HOO and Poly-HOO, respectively. For each algorithm, we report the sum of rewards obtained averaged over $10$ independent trials, each with a different randomly selected initial state (but the same across algorithms). The only exception are the results obtained in the Inverted Pendulum with $n=100$, were the number of trials was $30$. The differences in variance observed with different $n$ in the Inverted Pendulum tests are likely due to the discrepancy in the number of trials and, consequently, the usage of different initial states. Even though just slightly, HOOT is the only of the 3 algorithms that performed worse when running for a longer period. On the other hand, both LD-HOOT and Poly-HOOT accrued rewards over $2\%$ larger in this scenario, thus evidencing a superior performance against their predecessor (HOOT). As seen in Table~\ref{Table:Results-hoot}, all three algorithms perform optimally in the two cart-pole scenarios (dubbed {\sc Cart Pole} and {\sc Cart Pole ig}). Even with increased gravity, all algorithms were able to maintain the pole approximately vertical. In contrast, in the inverted pendulum scenario LD-HOOT outperforms HOOT, and performs similarly to Poly-HOOT. It is also noteworthy that, according to Open AI's leaderboard \cite{leaderboard}, the best performing algorithm in this scenario obtained an average cumulative reward of $92.45$. Our results are close but, due to the large variance, it is not possible to assert whether the differences are significative or not.

\begin{table}[!tb]
\centering
\caption{Average running time (in seconds) required to plan a single action, for different values of $n$ and $\hat{H}$.}
\label{Table:Running-time}
\begin{tabular}{lccc}
\toprule
\multicolumn{1}{c}{$n\mid\hat{H}$} 
  & $100\mid 5$ & $400\mid 6$ & $1000\mid 7$ \\
\midrule
\sf LD-HOOT   & $1.14 \pm 0.07$ & $4.51 \pm 0.06$ & $11.88 \pm 0.05$\\
\sf HOOT      & $1.06 \pm 0.04$ & $4.72 \pm 0.01$ & $15.15 \pm 2.55$\\
\sf POLY-HOOT & $1.15 \pm 0.03$ & $4.90 \pm 0.21$ & $12.76 \pm 0.15$\\
\bottomrule
\end{tabular}
\end{table}

Table~\ref{Table:Running-time} reports the running time for the three algorithms for different values of $n$ and $\hat{H}$, when planning a single action. The results again correspond to averages over 10 independent trials. Our results confirm that LD-HOOT is faster than HOOT, which was expected since LD-HOO is also faster than HOO. When $n=100$ and $\hat{H}=5$, HOOT was marginally faster, but given the small times considered it is possible that the differences may be due to other processes that are being run at the same time. LD-HOOT is also faster than Poly-HOOT in all settings considered, but differences were smaller.

Summarizing, LD-HOOT outperforms HOOT in both metrics considered, and is slightly faster than Poly-HOOT, although attaining similar cumulative rewards.

\section{Conclusion}%
\label{Sec:Conclusions}

In this work, we contributed LD-HOO, a novel algorithm for stochastic multi-armed bandit problems with a continuous action spaces. We provided a regret analysis for LD-HOO and establish that, under mild conditions, LD-HOO exhibits an asymptotic regret bound similar to that of other algorithms in the literature, while incurring in smaller computational costs. Our analysis was then complemented by an empirical study, in which we show that LD-HOO generally outperforms similar algorithms. 

We then propose LD-HOOT, a Monte Carlo tree search algorithm that uses LD-HOO to drive the expansion of the search tree. We tested LD-HOOT in several optimal control problems from the literature, and the results show that LD-HOO can be successfully used within MCTS to address optimal control problems with continuous actions spaces. Additionally, our results show that LD-HOOT is competitive with existing algorithms in terms of performance, while being computationally more efficient. 

An interesting line for future research is to build on the results presented herein---namely Theorem~\ref{Theo:Regret}---to establish the consistence of LD-HOOT in the sense that the probability of selecting the optimal action can be made to converge to 1 as the number of samples grows to infinity. Such results exist for MCTS with finite action spaces \cite{kocsis06ecml}, and we conjecture that a similar line of proof could be used to establish similar guarantees for LD-HOOT.

\section{Acknowledgments}

This work was partially supported by national funds through FCT, Fundação para a Ciência e a Tecnologia, under project UIDB/50021/2020.

\bibliographystyle{splncs04}
\bibliography{biblio.bib}

\begin{thebibliography}{10}
\providecommand{\url}[1]{\texttt{#1}}
\providecommand{\urlprefix}{URL }
\providecommand{\doi}[1]{https://doi.org/#1}

\bibitem{auer02ml}
Auer, P., {Cesa-Bianchi}, N., Fischer, P.: {Finite-time analysis of the
  multiarmed bandit problem}. Machine Learning  \textbf{47},  235--256 (2002)

\bibitem{brockman16arxiv}
Brockman, G., Cheung, V., Pettersson, L., Schneider, J., Schulman, J., Tang,
  J., Zaremba, W.: {OpenAI Gym}. CoRR  \textbf{abs/1606.01540} (2016)

\bibitem{browne12tcig}
Browne, C., Powley, E., Whitehouse, D., Lucas, S., Cowling, P., Rohlfshagen,
  P., Tavener, S., Perez, D., Samothrakis, S., Colton, S.: {A survey of Monte
  Carlo tree search methods}. IEEE Trans.\ Computational Intelligence and AI in
  Games  \textbf{4}(1),  1--43 (2012)

\bibitem{bubeck11jmlr}
Bubeck, S., Munos, R., Stoltz, G., Szepesv\'{a}ri, C.: {$X$-armed bandits}. J.\
  Machine Learning Res.  \textbf{12},  1655--1695 (2011)

\bibitem{bubeck09nips}
Bubeck, S., Stoltz, G., Szepesv\'{a}ri, C., Munos, R.: {Online optimization in
  $X$-armed bandits}. In: Adv.\ Neural Information Processing Systems 21. pp.
  201--208 (2009)

\bibitem{kearns99ijcai}
Kearns, M., Mansour, Y., Ng, A.: {A sparse sampling algorithm for near-optimal
  planning in large Markovian decision processes}. In: Proc.\ 16th Int.\ Joint
  Conf.\ Artificial Intelligence. pp. 1324--1331 (1999)

\bibitem{kocsis06ecml}
Kocsis, L., Szepesv\'{a}ri, C.: {Bandit based Monte-Carlo planning}. In: Proc.\
  17th European Conf.\ Machine Learning. pp. 282--293 (2006)

\bibitem{mansley11icaps}
Mansley, C., Weinstein, A., Littman, M.: {Sample-based planning for continuous
  action Markov decision processes}. In: Proc.\ 21st Int.\ Conf.\ Automated
  Planning and Scheduling. pp. 335--338 (2011)

\bibitem{mao20nips}
Mao, W., Zhang, K., Xie, Q., Basar, T.: {Poly-HOOT: Monte-Carlo planning in
  continuous space MDPs with non-asymptotic analysis}. In: Adv.\ Neural
  Information Proc.\ Systems 33. pp. 4549--4559 (2020)

\bibitem{munos11nips}
Munos, R.: {Optimistic optimization of a deterministic function without the
  knowledge of its smoothness}. In: Adv.\ Neural Information Processing Systems
  24. pp. 783--791 (2011)

\bibitem{munos14ftml}
Munos, R.: {From bandits to Monte-Carlo tree search: The optimistic principle
  applied to optimization and planning}. Foundations and Trends in Machine
  Learning  \textbf{7}(1),  1--129 (2014)

\bibitem{leaderboard}
{Open AI}: {Open AI Gym's Leaderboard}

\bibitem{puterman05}
Puterman, M.: Markov Decision Processes: Discrete Stochastic Dynamic
  Programming. John Wiley \& Sons (2005)

\bibitem{schrittwieser20nature}
Schrittwieser, J., Antonoglou, I., Hubert, T., Simonyan, K., Sifre, L.,
  Schmitt, S., Guez, A., Lockhart, E., Hassabis, D., Graepel, T., Lillicrap,
  T., Silver, D.: {Mastering Atari, Go, chess and Shogi by planning with a
  learned model}. Nature  \textbf{588},  604--609 (2020)

\bibitem{silver16nature}
Silver, D., Huang, A., Maddison, C., Guez, A., Sifre, L., Driessche, G.,
  Schrittwieser, J., Antonoglou, I., Panneershelvam, V., Lanctot, M., Dieleman,
  S., Grewe, D., Nham, J., Kalchbrenner, N., Sutskever, I., Lillicrap, T.,
  Leach, M., Kavukcuoglu, K., Graepel, T., Hassabis, D.: {Mastering the game of
  Go with deep neural networks and tree search}. Nature  \textbf{529},
  484--489 (2016)

\bibitem{silver18science}
Silver, D., Hubert, T., Schrittwieser, J., Antonoglou, I., Lai, M., Guez, A.,
  Lanctot, M., Laurent, S., Kumaran, D., Graepel, T., Lillicrap, T., Simonyan,
  K., Hassabis, D.: {A general reinforcement learning algorithm that masters
  chess, Shogi, and Go through self-play}. Science  \textbf{362}(6419),
  1140--1144 (2018)

\bibitem{silver17nature}
Silver, D., Schrittwieser, J., Simonyan, K., Antonoglou, I., Huang, A., Guez,
  A., Hubert, T., Baker, L., Lai, M., Bolton, A., Chen, Y., Lillicrap, T., Hui,
  F., Sifre, L., {van den Driessche}, G., Graepel, T., Hassabis, D.: {Mastering
  the game of Go without human knowledge}. Nature  \textbf{550},  354--359
  (2017)

\end{thebibliography}

\newpage

\setcounter{page}{1}

\appendix

\title{Limited depth bandit-based strategy for Monte Carlo planning in continuous action spaces}
\titlerunning{Suplementary material}

\author{\sf Supplementary material for paper n.\ 923}
\institute{}

\maketitle

\begin{abstract}
This appendix includes a detailed proof for Theorem~\ref{Theo:Regret}, establishing an asymptotic regret bound for LD-HOO.
\end{abstract}

\section{Proof of Theorem 1}

The proof follows the structure of the proof sketch provided in the manuscript, and is also based on the analysis of HOO found in the work of Bubeck et al \cite{bubeck11jmlr}. Before dwelling into the proof, we introduce some auxiliary results. 

\subsection{Auxiliary results}

We define the regret of a node $(h,i)\in\T_n$ as 
\begin{equation*}
r_{h,i}=f(x^*)-\sup_{x\in\X_{h,i}}f(x).
\end{equation*}
Fix an optimal path $(0,0)\to(1,i^*_1)\to\ldots\to(\hat{H},i^*_{\hat{H}})$, where $(h,i^*_h)$ is a child of $(h-1,i_{h-1}^*)$ and $i^*_h$ is such that $r_{h,i^*_h}=0$. Define the set
\begin{equation*}
\hat{\C}(h,i)=\set{c\in\T_n\mid\X_c\subset\X_{h,i}},
\end{equation*}
i.e., $\hat{\C}(h,i)$ is the set containing node $(h,i)$ and all its descendants. The next lemma bounds the number of times that a suboptimal node $(h,i)$ is selected up to iteration $n$. Having a bound for the expected number of times that a suboptimal node can be played is crucial to bound the expected cumulative regret.

\begin{lemma}\label{lemma:expected_t_hi_1}
Let $(h_0,i_0)$ be a suboptimal node (i.e., a node not in the path above), and let $k$ be the largest integer such that $0\leq k\leq h_0-1$ and $(k,i^*_k)$ is in the path from $(0,0)$ to $(h_0,i_0)$.\footnote{The extreme case is $k=0$.} Then for all integers $z\geq0$, we have

\begin{multline*}
\EE{T_{h_0,i_0}(n)}\leq z+\sum_{t=z+1}^n\mathbb{P}\Big[\big(u_{h,i^*_h}(t)\leq f(x^*) \text{ for some } h\in\{k+1,\ldots,\hat{H}\}\big) \\ 
    \text{ or } \big(T_{h_0,i_0}(t)>z \text{ and } u_{h_0,i_0}(t)>f(x^*)\big)\Big].
\end{multline*}
\end{lemma}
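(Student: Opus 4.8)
The plan is to follow the classical UCB-style bookkeeping argument, adapted to the recursively defined $b$-values of LD-HOO. First I would write $T_{h_0,i_0}(n)=\sum_{t=1}^n\II{X_t\in\X_{h_0,i_0}}$ and split the horizon at the threshold $z$. Since each round with $X_t\in\X_{h_0,i_0}$ increments the counter $T_{h_0,i_0}$ by one, at most $z$ such selections can occur while $T_{h_0,i_0}(t)\leq z$, and every remaining selection forces $T_{h_0,i_0}(t)>z$ (hence also $t\geq z+1$). This already yields
\[
T_{h_0,i_0}(n)\leq z+\sum_{t=z+1}^n\II{X_t\in\X_{h_0,i_0}\text{ and }T_{h_0,i_0}(t)>z},
\]
so it only remains to show that $\{X_t\in\X_{h_0,i_0}\}$ is contained in the union of the two events appearing inside the probability in the statement, after which taking expectations finishes the argument.

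The core is a deterministic claim about the traversal at round $t$. Whenever $X_t\in\X_{h_0,i_0}$, the optimistic traversal of Algorithm~\ref{Alg:LD-HOO} reaches a leaf that is $(h_0,i_0)$ or a descendant of it; in particular it passes through $(k,i^*_k)$ and, at that node, selects the child $(k+1,j)$ leading to $(h_0,i_0)$ rather than the optimal child $(k+1,i^*_{k+1})$. By the $\argmax$ selection rule this requires $b_{k+1,j}(t)\geq b_{k+1,i^*_{k+1}}(t)$. Reading the recursion \eqref{Eq:b-value} in two ways — first that a parent's $b$-value never exceeds the $b$-value of the child chosen by the $\argmax$ (so $b$-values are nondecreasing down the traversed path), and second that $b_{h,i}(t)\leq u_{h,i}(t)$ always holds — gives the chain $u_{h_0,i_0}(t)\geq b_{h_0,i_0}(t)\geq b_{k+1,j}(t)\geq b_{k+1,i^*_{k+1}}(t)$.

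Next I would propagate a confidence bound along the optimal path: if $u_{h,i^*_h}(t)>f(x^*)$ for every $h\in\{k+1,\dots,\hat H\}$, then $b_{k+1,i^*_{k+1}}(t)>f(x^*)$. Because $(k,i^*_k)$ is an already-expanded node at round $t$, its child $(k+1,i^*_{k+1})$ belongs to $\T_t$; letting $(h^\dagger,i^*_{h^\dagger})$ be the deepest optimal-path node present in $\T_t$, this node is a leaf with $k+1\leq h^\dagger\leq\hat H$, so $b_{h^\dagger,i^*_{h^\dagger}}(t)=u_{h^\dagger,i^*_{h^\dagger}}(t)>f(x^*)$. A backward induction up the path, using $b_{h,i^*_h}(t)\geq\min\{u_{h,i^*_h}(t),b_{h+1,i^*_{h+1}}(t)\}$ together with the assumed bound on each $u_{h,i^*_h}(t)$, carries the inequality up to level $k+1$. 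Combined with the chain of the previous paragraph, this shows $u_{h_0,i_0}(t)>f(x^*)$ whenever every optimal-path bound holds.

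Putting the two claims together, on the event $\{X_t\in\X_{h_0,i_0}\}$ either some optimal-path bound fails, $u_{h,i^*_h}(t)\leq f(x^*)$ for some $h\in\{k+1,\dots,\hat H\}$, or $u_{h_0,i_0}(t)>f(x^*)$; intersecting with $\{T_{h_0,i_0}(t)>z\}$ and taking expectations converts the indicator sum into the claimed sum of probabilities. The step I expect to be the main obstacle is this propagation along the optimal path: one must carefully track which optimal-path nodes are actually present in $\T_t$ (the optimal path need not be fully expanded) so that the backward induction on the recursively defined $b$-values is well founded, and verify that the depth cap $\hat H$ enters exactly as the top index of the range of $h$. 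The horizon split and the monotonicity of $b$-values along the traversal are routine by comparison.
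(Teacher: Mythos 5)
Your proposal is correct and follows essentially the same route as the paper's proof: the split of $T_{h_0,i_0}(n)$ at the threshold $z$, the chain $u_{h_0,i_0}(t)\geq b_{h_0,i_0}(t)\geq b_{k+1,i^*_{k+1}}(t)$ from the $\argmax$ rule and the recursion \eqref{Eq:b-value}, and the propagation along the optimal path terminated by the depth cap $\hat H$ are exactly the paper's ingredients. The only cosmetic difference is that you run the optimal-path propagation as a backward induction from the deepest optimal-path node in $\T_t$ (the contrapositive), whereas the paper unfolds the implication $b_{k+1,i^*_{k+1}}(t)\leq f(x^*)\Rightarrow\big(u_{k+1,i^*_{k+1}}(t)\leq f(x^*)\text{ or }b_{k+2,i^*_{k+2}}(t)\leq f(x^*)\big)$ forward down to depth $\hat H$, where $b_{\hat H,i^*_{\hat H}}(t)=u_{\hat H,i^*_{\hat H}}(t)$ --- logically the same argument.
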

\begin{proof}
Let $(h_t,i_t)$ denote the node selected by LD-HOO at some time step $t$. If $(h_t,i_t)\in\hat{\C}(h_0,i_0)$, then at depth $k$ there is a node $(k+1,i)\in\C(k,i^*_k)$ which, by definition of $k$, is different from $(k+1,i^*_{k+1})$.

By construction, the $b$-value of a node is greater than or equal to that of its parent node. Hence, since node $(k+1,i)$ was selected instead of $(k+1,i^*_{k+1})$, then $b_{k+1,i^*_{k+1}}(t)\leq b_{k+1,i}(t)\leq b_{h_0,i_0}(t)$. This implies, again by construction, that $b_{h_0,i_0}(t)\leq u_{h_0,i_0}(t)$ and $b_{k+1,i^*_{k+1}}(t)\leq u_{h_0,i_0}(t)$. Putting everything together, we have
\begin{equation}\label{eq:lemma_htin}
\begin{split}
(h_t,i_t)\in\hat{\C}(h_0,i_0)
  &\Rightarrow u_{h_0,i_0}(t)\geq b_{k+1,i^*_{k+1}}(t) \\
  &\Rightarrow \big(u_{h_0,i_0}(t)>f(x^*) \text { or } b_{k+1,i^*_{k+1}}(t)\leq f(x^*)\big).
\end{split}
\end{equation}
To see why the last implication holds, suppose that $u_{h_0,i_0}(t)\geq b_{k+1,i^*_{k+1}}(t)$. Then, if $u_{h_0,i_0}(t)\leq f(x^*)$, it follows that $b_{k+1,i^*_{k+1}}(t)\leq u_{h_0,i_0}(t)\leq f(x^*)$, and the conclusion follows. 

We also have that
\begin{equation}\label{eq:lemma_bk_in_bk1}
b_{k+1,i^*_{k+1}}(t)\leq f(x^*)
  \Rightarrow \big(u_{k+1,i^*_{k+1}}(t)\leq f(x^*) \text{ or } b_{k+2,i^*_{k+2}}(t)\leq f(x^*)\big).
\end{equation}
To see why the implication in \eqref{eq:lemma_bk_in_bk1} holds, suppose that $b_{k+1,i^*_{k+1}}\leq f(x^*)$ holds. Then, if $u_{k+1,i^*_{k+1}}(t)> f(x^*)$, the child of $(k+1,i^*_{k+1})$ with maximum $b$-value, say $(k+2,j)$, satisfies $b_{k+2,j}(t)\leq f(x^*)$ since, by construction,
\begin{equation*}
	b_{k+1,i^*_{k+1}}(t)=\min\set{u_{k+1,i^*_{k+1}}(t),b_{k+2,j}(t)}.
\end{equation*}
Then, we must have $b_{k+2,i^*_{k+2}}(t)\leq f(x^*)$ and the conclusion follows. 

Finally, since LD-HOO does not expand nodes beyond depth $\hat{H}$, we have that
\begin{equation}\label{eq:lemma_bH}
b_{\hat{H},i^*_{\hat{H}}}(t)=u_{\hat{H},i^*_{\hat{H}}}(t).
\end{equation}
Unfolding \eqref{eq:lemma_bk_in_bk1} and combining with \eqref{eq:lemma_bH} yields
\begin{align*}
\lefteqn{b_{k+1,i^*_{k+1}}(t)\leq f(x^*)}\\
    &\Rightarrow\big(u_{k+1,i^*_{k+1}}(t)\leq f(x^*) \text{ or } b_{k+2,i^*_{k+2}}(t)\leq f(x^*)\big)\\
    &\Rightarrow\big(u_{k+1,i^*_{k+1}}(t)\leq f(x^*) \text{ or } u_{k+2,i^*_{k+2}}(t)\leq f(x^*) \text{ or } b_{k+3,i^*_{k+3}}(t)\leq f(x^*)\big)\\
    & \vdots \\
    &\Rightarrow \big(u_{k+1,i^*_{k+1}}(t)\leq f(x^*) \text{ or } u_{k+2,i^*_{k+2}}(t)\leq f(x^*) \text{ or } \ldots \text{ or }u_{\hat{H},i^*_{\hat{H}}}(t)\leq f(x^*)\big).
\end{align*}
Combining with \eqref{eq:lemma_htin}, we finally get
\begin{multline}\label{eq:lemma_total_inclusion}
(h_t,i_t)\in\hat{\mathcal{C}}(h_0,i_0)\\
  \Rightarrow \big(u_{h_0,i_0}(t)>f(x^*) \text{ or }u_{k+1,i^*_{k+1}}(t)\leq f(x^*) \text{ or }  \ldots \text{ or }u_{\hat{H},i^*_{\hat{H}}}(t)\leq f(x^*).
\end{multline}

For any integer $z\geq 0$, 
\begin{align*}
\lefteqn{T_{h_0,i_0}(n)}\\
  &=\sum_{t=1}^n\II{(h_t,i_t)\in\hat{\C}(h_0,i_0)}\\
  &=\sum_{t=1}^n\II{(h_t,i_t)\in\hat{\C}(h_0,i_0),T_{h_0,i_0}(t)\leq z}+\sum_{t=1}^n\II{(h_t,i_t)\in\hat{\C}(h_0,i_0),T_{h_0,i_0}(t)>z}\\
  &\leq z+\sum_{t=z+1}^n\II{(h_t,i_t)\in\hat{\C}(h_0,i_0),T_{h_0,i_0}(t)>z}.
\end{align*}
Taking the expectation and replacing \eqref{eq:lemma_total_inclusion}, we finally get
\begin{align*}
\lefteqn{\EE{T_{h_0,i_0}(n)}}\\
  &\leq z+\sum_{t=z+1}^n\PP{(h_t,i_t)\in\hat{\C}(h_0,i_0),T_{h_0,i_0}(t)>z}\\
  &\leq z+\sum_{t=z+1}^n\mathbb{P}\Big[T_{h_0,i_0}(t)>z \text{ and } \big(u_{h,i^*_h}(t)\leq f(x^*) \text{ for some } h\in\{k+1,\ldots,\hat{H}\}\big) \\ 
  &  \hspace{5.5cm}\text{ or } \big(T_{h_0,i_0}(t)>z \text{ and } u_{h_0,i_0}(t)>f(x^*)\big)\Big]\\
  &\leq z+\sum_{t=z+1}^n\mathbb{P}\Big[\big(u_{h,i^*_h}(t)\leq f(x^*) \text{ for some } h\in\{k+1,\ldots,\hat{H}\}\big) \\ 
  &  \hspace{5.5cm}\text{ or } \big(T_{h_0,i_0}(t)>z \text{ and } u_{h_0,i_0}(t)>f(x^*)\big)\Big].
\end{align*}
The proof is thus complete.
\end{proof}

The next lemmata are established by Bubeck et al. \cite{bubeck11jmlr} and are provided without proof.

\begin{lemma}\label{lemma:p_optimal}
If Assumptions~\ref{Ass:Semi-metric} through \ref{Ass:Smooth-payoff} hold then, for all optimal nodes $(h,i)$ and integers $t>0$,
\begin{equation*}
    \PP{u_{h,i}(t)\leq f(x^*)}\leq\frac{1}{t^3}.
\end{equation*}
\end{lemma}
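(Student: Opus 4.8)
The plan is to relate the event $\{u_{h,i}(t)\leq f(x^*)\}$ to a one-sided concentration failure of the empirical reward and then control that failure with a Hoeffding-type bound, paying attention to the fact that the number of samples $T_{h,i}(t)$ is itself random.

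First I would introduce the (random) conditional mean of the rewards collected in the cell, $\mu_{h,i}(t)=\frac{1}{T_{h,i}(t)}\sum_{\tau=1}^t f(X_\tau)\II{X_\tau\in\X_{h,i}}$, so that $\hat{\mu}_{h,i}(t)-\mu_{h,i}(t)$ is a normalized sum of bounded, conditionally centered increments. Substituting the definition \eqref{Eq:Bound-u} of $u_{h,i}(t)$, I would write
\[
u_{h,i}(t)-f(x^*)=\big(\hat{\mu}_{h,i}(t)-\mu_{h,i}(t)\big)+\big(\mu_{h,i}(t)+\nu_1\rho^h-f(x^*)\big)+\sqrt{\frac{2\log t}{T_{h,i}(t)}}.
\]

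The key structural step is to show that the middle bracket is nonnegative for an optimal node. Since $(h,i)$ is optimal, $\sup_{x\in\X_{h,i}}f(x)=f(x^*)$, so for every $x'\in\X_{h,i}$ and every $\delta>0$ there is $x\in\X_{h,i}$ with $f(x^*)-f(x)<\delta$; Assumption~\ref{Ass:Smooth-payoff} then gives $f(x^*)-f(x')\leq\delta+\max\{\delta,\ell(x,x')\}$, and since $\ell(x,x')\leq\diam(\X_{h,i})\leq\nu_1\rho^h$ by Assumption~\ref{Ass:Hierarchy}, letting $\delta\to0$ yields $f(x^*)-f(x')\leq\nu_1\rho^h$ uniformly over the cell. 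Averaging over the selected actions gives $\mu_{h,i}(t)\geq f(x^*)-\nu_1\rho^h$, i.e. the middle bracket is $\geq0$. Consequently $\{u_{h,i}(t)\leq f(x^*)\}$ implies $\hat{\mu}_{h,i}(t)-\mu_{h,i}(t)\leq-\sqrt{2\log t/T_{h,i}(t)}$, and it remains only to bound the probability of this deviation.

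For the concentration I would first fix the number of samples: conditioned on the cell being visited exactly $s$ times, $\hat{\mu}_{h,i}(t)$ is an average of $s$ conditionally independent rewards in $[0,1]$, so Hoeffding's inequality gives a deviation probability of at most $\exp(-2s\cdot 2\log t/s)=t^{-4}$. The main obstacle---and the only real subtlety---is that $T_{h,i}(t)$ is not a fixed integer but a random count produced by the adaptive traversal policy, so Hoeffding cannot be applied directly at $s=T_{h,i}(t)$. I would resolve this exactly as in the martingale analysis of Bubeck et al.~\cite{bubeck11jmlr}: either invoke a Hoeffding--Azuma maximal inequality for the martingale $\hat{\mu}_{h,i}-\mu_{h,i}$ indexed by the visit count, or take a union bound over the $t$ possible values $s\in\{1,\ldots,t\}$. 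The union bound gives $\PP{u_{h,i}(t)\leq f(x^*)}\leq\sum_{s=1}^t t^{-4}=t^{-3}$, which is the claimed estimate; making this union/maximal step rigorous under adaptive sampling is where the care is required, the remaining manipulations being routine.
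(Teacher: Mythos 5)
Your proof is correct, and it is essentially the argument the paper relies on: the paper states this lemma \emph{without proof}, citing Bubeck et al.~\cite{bubeck11jmlr}, and your reconstruction matches their proof of the corresponding result (their Lemma~14) step for step---the weak-Lipschitz argument showing $f(x^*)-f(x')\leq\nu_1\rho^h$ for every $x'$ in an optimal cell, hence $\mu_{h,i}(t)\geq f(x^*)-\nu_1\rho^h$, followed by a Hoeffding--Azuma bound of $t^{-4}$ per sample count and a union bound over the $t$ possible counts to get $t^{-3}$. One caution on phrasing: ``conditioned on the cell being visited exactly $s$ times'' is not the right formalization, since conditioning on $\{T_{h,i}(t)=s\}$ biases the rewards; the correct decomposition is a union over $s=1,\ldots,t$ of deviation events for the average of the \emph{first} $s$ (martingale-centered) rewards collected in the cell---a repair you yourself identify when deferring to the martingale/union-bound treatment, so this is a wording issue rather than a gap.
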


\begin{lemma}\label{lemma:p_suboptimal}
Suppose that Assumptions~\ref{Ass:Semi-metric} through \ref{Ass:Smooth-payoff} hold. Then, for all integers $t\leq n$, all suboptimal nodes $(h,i)$ such that $r_{h,i}>\nu_1\rho^h$, and for all integers $z$ such that
\begin{equation*}
    z\geq\frac{8\log n}{(r_{h,i}-\nu_1\rho^h)^2},
\end{equation*}
it holds that
\begin{equation*}
\PP{u_{h,i}(t)>f(x^*) \text{ and } T_{h,i}(t)>z}\leq\frac{t}{n^4}.
\end{equation*}
\end{lemma}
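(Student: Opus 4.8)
The plan is to peel away the definition of the upper confidence bound so that the stated joint event becomes a single large-deviation event for the empirical mean $\hat{\mu}_{h,i}(t)$, and then to dispatch that event with a Hoeffding-type concentration inequality. Write $f^*_{h,i}=\sup_{x\in\X_{h,i}}f(x)$, so that $r_{h,i}=f(x^*)-f^*_{h,i}$, and note that the hypothesis $r_{h,i}>\nu_1\rho^h$ makes the quantity $r_{h,i}-\nu_1\rho^h$ strictly positive throughout. Unfolding the definition of $u_{h,i}(t)$ in \eqref{Eq:Bound-u}, the event $u_{h,i}(t)>f(x^*)$ is exactly
\[
\hat{\mu}_{h,i}(t)-f^*_{h,i}>r_{h,i}-\nu_1\rho^h-\sqrt{\frac{2\log t}{T_{h,i}(t)}}.
\]

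Next I would use the hypotheses on $z$ to absorb the exploration radius. On the event $T_{h,i}(t)>z$, and since $t\leq n$ and $z\geq 8\log n/(r_{h,i}-\nu_1\rho^h)^2$, the radius obeys
\[
\sqrt{\frac{2\log t}{T_{h,i}(t)}}\leq\sqrt{\frac{2\log n}{z}}\leq\frac{r_{h,i}-\nu_1\rho^h}{2}.
\]
Substituting this into the previous display shows that the joint event $\{u_{h,i}(t)>f(x^*)\text{ and }T_{h,i}(t)>z\}$ is contained in the deviation event $\{\hat{\mu}_{h,i}(t)-f^*_{h,i}>\tfrac{1}{2}(r_{h,i}-\nu_1\rho^h)\}$, so it suffices to bound the probability of the latter.

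To bound the deviation I would condition on the number of samples. Because every $x\in\X_{h,i}$ has conditional mean reward $f(x)\leq f^*_{h,i}$, the increments $y_\tau-f(x_\tau)$ collected inside the cell form a bounded martingale-difference sequence, and $\hat{\mu}_{h,i}-f^*_{h,i}$ is dominated by their average. For a fixed sample count $s$, the Hoeffding-Azuma inequality gives
\[
\PP{\hat{\mu}_{h,i,s}-f^*_{h,i}>\tfrac{1}{2}(r_{h,i}-\nu_1\rho^h)}\leq\exp\!\left(-\tfrac{1}{2}s(r_{h,i}-\nu_1\rho^h)^2\right),
\]
which for every $s>z$ is at most $\exp(-4\log n)=n^{-4}$. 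Since on this event $T_{h,i}(t)$ takes one of the at most $t$ integer values in $\{z+1,\dots,t\}$, a union bound over the realized sample count multiplies this by at most $t$ and delivers the claimed $t/n^4$.

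The delicate point is this last concentration step rather than the algebra: the sample count $T_{h,i}(t)$ is random and the actions inside a cell are selected adaptively by the algorithm, so a Hoeffding bound for a fixed independent sample does not apply verbatim. The clean remedy---and the technical heart inherited from Bubeck et al.---is to fix $s$, apply the martingale form of Hoeffding's inequality to the centered rewards at that sample count, and then pay only a linear-in-$t$ factor through the union over the possible values of $T_{h,i}(t)$.
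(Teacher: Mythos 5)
Your proposal is correct and is essentially the intended argument: absorbing the exploration radius via $T_{h,i}(t)>z$, $t\leq n$, and the choice of $z$; reducing the joint event to the deviation $\hat{\mu}_{h,i}(t)-f^*_{h,i}>\tfrac{1}{2}(r_{h,i}-\nu_1\rho^h)$; and then applying the Hoeffding--Azuma inequality at each fixed sample count $s>z$ (giving $n^{-4}$ per value of $s$) followed by a union bound over the at most $t$ possible values of $T_{h,i}(t)$, with the martingale formulation correctly handling the adaptive in-cell sampling. Note that the paper itself states this lemma without proof, deferring to Bubeck et al.\ \cite{bubeck11jmlr}; your reconstruction matches the proof of the corresponding lemma in that reference, so there is nothing to flag.
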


\begin{lemma}\label{lemma:subopt_factor}
Suppose that Assumptions~\ref{Ass:Semi-metric} through \ref{Ass:Smooth-payoff} hold. If, for a node $(h,i)$, $r_{h,i}\leq c\nu_1\rho^h$, then
\begin{equation}
\X_{h,i}\subset\X_{\max\{2c,c+1\}\nu_1\rho^h}.
\end{equation}
\end{lemma}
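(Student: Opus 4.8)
The plan is to derive the inclusion directly from Assumption~\ref{Ass:Smooth-payoff}, using a near-maximizer of $f$ inside the cell $\X_{h,i}$ together with the diameter bound supplied by Assumption~\ref{Ass:Hierarchy}. I would fix an arbitrary point $x'\in\X_{h,i}$ and show that $f(x^*)-f(x')\leq\max\set{2c,c+1}\nu_1\rho^h$, which is precisely the statement that $x'\in\X_{\max\set{2c,c+1}\nu_1\rho^h}$; since $x'$ is arbitrary, this establishes $\X_{h,i}\subset\X_{\max\set{2c,c+1}\nu_1\rho^h}$.

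First, from the hypothesis $r_{h,i}=f(x^*)-\sup_{x\in\X_{h,i}}f(x)\leq c\nu_1\rho^h$, for every $\delta>0$ there is a point $\tilde{x}\in\X_{h,i}$ with $f(x^*)-f(\tilde{x})<c\nu_1\rho^h+\delta$. Second, since $\tilde{x}$ and $x'$ both lie in $\X_{h,i}$, Assumption~\ref{Ass:Hierarchy} gives $\ell(\tilde{x},x')\leq\diam(\X_{h,i})\leq\nu_1\rho^h$. Next I would instantiate Assumption~\ref{Ass:Smooth-payoff} with its $x$ equal to $\tilde{x}$ and its $x'$ equal to the fixed $x'$, which yields
\begin{equation*}
f(x^*)-f(x')\leq\big(f(x^*)-f(\tilde{x})\big)+\max\set{f(x^*)-f(\tilde{x}),\ell(\tilde{x},x')}.
\end{equation*}
Substituting the two bounds above and letting $\delta\to0$ gives
\begin{equation*}
f(x^*)-f(x')\leq c\nu_1\rho^h+\max\set{c\nu_1\rho^h,\nu_1\rho^h}=\big(c+\max\set{c,1}\big)\nu_1\rho^h.
\end{equation*}

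A short case split closes the argument: when $c\geq1$ the right-hand side equals $2c\nu_1\rho^h$, and when $c<1$ it equals $(c+1)\nu_1\rho^h$; since $2c\geq c+1$ holds exactly when $c\geq1$, in both cases the bound is $\max\set{2c,c+1}\nu_1\rho^h$, as required.

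This is essentially a single application of the smoothness assumption, so I do not anticipate a genuine obstacle. The only points needing care are that the supremum defining $r_{h,i}$ may fail to be attained---handled cleanly by the $\delta$-approximation and the limit $\delta\to0$---and the bookkeeping that matches the conclusion's $\max\set{2c,c+1}$ to the quantity $c+\max\set{c,1}$ via the case analysis on whether $c\geq1$.
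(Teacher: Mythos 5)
Your proof is correct: the paper itself states this lemma without proof (deferring to Bubeck et al.~\cite{bubeck11jmlr}), and your argument---applying Assumption~\ref{Ass:Smooth-payoff} with $x$ a $\delta$-near-maximizer inside the cell and $x'$ an arbitrary point of $\X_{h,i}$, bounding $\ell(\tilde{x},x')$ by $\diam(\X_{h,i})\leq\nu_1\rho^h$, and then identifying $c+\max\set{c,1}=\max\set{2c,c+1}$---is precisely the standard proof from that reference. The handling of the possibly unattained supremum via $\delta\to 0$ and the placement of the near-maximizer in the asymmetric role of $x$ in Assumption~\ref{Ass:Smooth-payoff} are both done correctly, so there is nothing to fix.
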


The next result provides an upper bound for the number of visits to a suboptimal node $(h,i)$.

\begin{lemma}\label{lemma:expected_t_hi_2}
If Assumptions~\ref{Ass:Semi-metric} through \ref{Ass:Smooth-payoff} hold then, for all suboptimal nodes $(h,i)$ with $r_{h,i}>\nu_1\rho^h$ and $n\geq 1$,
\begin{equation*}
\EE{T_{h,i}(n)}\leq\frac{8\log n}{(r_{h,i}-\nu_1\rho^h)^2}+2(\hat{H}+1).
\end{equation*}
\end{lemma}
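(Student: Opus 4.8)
The plan is to combine the three preceding lemmata, with Lemma~\ref{lemma:expected_t_hi_1} as the backbone and the free integer $z$ chosen to meet the threshold of Lemma~\ref{lemma:p_suboptimal}. Concretely, I would set $z=\left\lceil 8\log n/(r_{h,i}-\nu_1\rho^h)^2\right\rceil$, which is well defined precisely because the hypothesis $r_{h,i}>\nu_1\rho^h$ guarantees a positive denominator. This choice satisfies $z\geq 8\log n/(r_{h,i}-\nu_1\rho^h)^2$, so Lemma~\ref{lemma:p_suboptimal} becomes applicable, and it also gives $z\leq 8\log n/(r_{h,i}-\nu_1\rho^h)^2+1$, which will supply the first term of the claimed bound up to an additive constant absorbed at the end.

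Next I would bound the summand appearing in Lemma~\ref{lemma:expected_t_hi_1} by splitting its disjunction with a union bound into the two events it describes. For the first event, the nodes $(h',i^*_{h'})$ lie on the fixed optimal path and are therefore optimal (i.e.\ $r_{h',i^*_{h'}}=0$), so Lemma~\ref{lemma:p_optimal} applies and yields $\PP{u_{h',i^*_{h'}}(t)\leq f(x^*)}\leq t^{-3}$ for each such $h'$. A further union bound over $h'\in\{k+1,\ldots,\hat{H}\}$, which comprises at most $\hat{H}$ levels since $k\geq 0$, then gives a per-round contribution of at most $\hat{H}\,t^{-3}$. For the second event, Lemma~\ref{lemma:p_suboptimal} applies directly, since $z$ meets its threshold and $t\leq n$ throughout the sum, contributing at most $t/n^4$ per round.

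Summing over $t$ from $z+1$ to $n$ then reduces to two elementary series. The optimal-path contribution is at most $\hat{H}\sum_{t\geq 1}t^{-3}<2\hat{H}$, and the suboptimal contribution is $\sum_{t\leq n}t/n^4=(n+1)/(2n^3)\leq 1$ for $n\geq 1$. Collecting $z$, these two sums, and the $+1$ arising from the ceiling, the additive constants combine to at most $1+2\hat{H}+1=2(\hat{H}+1)$, which produces exactly the stated bound.

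I expect the only delicate point to be the constant bookkeeping in this last step, and in particular the observation that the union over levels in the first event ranges over at most $\hat{H}$ depths. This boundedness is precisely what the depth cap $\hat{H}$ of LD-HOO provides, and it is what keeps that contribution additive in $\hat{H}$ rather than growing with the horizon $n$ as it would in the unbounded setting of the original HOO analysis. Verifying that $\sum_{t\geq 1}t^{-3}$ and $\sum_{t\leq n}t/n^4$ are bounded by absolute constants, and that the pieces total $2(\hat{H}+1)$, is otherwise routine; the remaining work consists of the direct invocations of Lemmata~\ref{lemma:expected_t_hi_1}, \ref{lemma:p_optimal}, and \ref{lemma:p_suboptimal} once the event counter $z$ has been fixed.
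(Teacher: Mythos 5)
Your proposal is correct and follows essentially the same route as the paper's proof: the same choice $z=\left\lceil 8\log n/(r_{h,i}-\nu_1\rho^h)^2\right\rceil$ plugged into Lemma~\ref{lemma:expected_t_hi_1}, the same union bound splitting the event into the optimal-path part (handled by Lemma~\ref{lemma:p_optimal}, at most $\hat{H}$ levels, giving $\hat{H}\sum_t t^{-3}<2\hat{H}$) and the suboptimal part (handled by Lemma~\ref{lemma:p_suboptimal}, giving $\sum_{t\le n}t/n^4\le 1$), and the same constant bookkeeping totaling $2(\hat{H}+1)$. No gaps.
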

\begin{proof}
Let $(h_0,i_0)$ denote an arbitrary suboptimal node. Taking 
\begin{equation*}
z=\left\lceil\frac{8\log n}{(r_{h_0,i_0}-\nu_1\rho^{h_0})^2}\right\rceil
\end{equation*}
in Lemma~\ref{lemma:expected_t_hi_1}, we get 
\begin{multline*}
\EE{T_{h_0,i_0}(n)}
  \leq\frac{8\log n}{(r_{h,i}-\nu_1\rho^h)^2}+1\\
    +\sum_{t=z+1}^n\Big(\PP{T_{h_0,i_0}(t)>z \text{ and } u_{h_0,i_0}(t)>f(x^*)}
    +\sum_{h=1}^{\hat{H}}\PP{u_{h,i^*_h}(t)\leq f(x^*)}\Big).
\end{multline*}

Using Lemmas~\ref{lemma:p_optimal} and \ref{lemma:p_suboptimal}, 
\begin{align*}
\EE{T_{h_0,i_0}(n)}
    &\leq\frac{8\log n}{(r_{h,i}-\nu_1\rho^h)^2}+1+\sum_{t=z+1}^n\Big(\frac{t}{n^4}+\frac{\hat{H}}{t^3}\Big)\\
    &\leq\frac{8\log n}{(r_{h,i}-\nu_1\rho^h)^2}+1+\frac{1}{n^2}+\hat{H}\sum^n_{t=1}\frac{1}{t^3}\\
    &\leq\frac{8\log n}{(r_{h,i}-\nu_1\rho^h)^2}+2+2\hat{H},
\end{align*}
where the last inequality follows from the fact that $\sum_{t=1}^n\frac{1}{t^3}<2$.
\end{proof}

Having proved Lemma \ref{lemma:expected_t_hi_2}, it is now possible to finalise the proof of the main result. 

\subsection{Proof of the theorem}

The proof proceeds along the same four steps outlined in the manuscript. We specialize the bound in Lemma~\ref{lemma:expected_t_hi_2} to bound the number of times that suboptimal nodes in the tree are selected. The second step breaks down the value of $\EE{R_n}$ into three components. The third step bounds each component of the regret individually. The fourth step wraps the proof by combining the intermediate results into a final regret bound.

\vspace{2ex}

We revisit some notation. For a node $(h,i)$ in $\T_n$, let $f_{h,i}^*=\sup_{x\in\X_{h,i}}f(x)$, and define the set 
\begin{equation*}
\I_h=\set{(h,i)\mid f^*_{h,i}\geq f(x^*)-2\nu_1\rho^h},
\end{equation*}
i.e., $\I_h$ is the set of $2\nu_1\rho^h$-optimal nodes at depth $h$. Let $\J_h$ be the set of nodes at depth $h$ that are not in $\I_h$ but whose parents are in $\I_{h-1}$.

\paragraph*{First step.} We start by noting that we can use Lemma~\ref{lemma:expected_t_hi_2} to bound the expected number of times that a node $(h,i)\in\J_h$ is played. Noting that, for these nodes, $r_{h,i}>2\nu_1\rho^h$, then
\begin{equation}\label{eq:theorem_bound_Jh}
    \EE{T_{h,i}(n)}\leq\frac{8\log n}{(2\nu_1\rho^h-\nu_1\rho^h)^2}+2(\hat{H}+1)=\frac{8\log n}{\nu_1^2\rho^{2h}}+2(\hat{H}+1).
\end{equation}

We can also bound the cardinality of sets $\I_h$. Define
\begin{equation*}
\U_h=\cup_{c\in\I_h}\X_c.
\end{equation*}
In other words, $\U_h$ is the union of all cells $\X_{h,i}$ such that $(h,i)\in\I_h$. Since, by definition, for $(h,i)\in\I_h$, $r_{h,i}\leq2\nu_1\rho^h$, it follows from Lemma~\ref{lemma:subopt_factor} (with $c=2)$ that $\X_{h,i}\subset\X_{4\nu_1\rho^h}$ and, therefore, $\U_h\subset \X_{4\nu_1\rho^h}$. But then, the cardinality of $\I_h$ can be bounded as
\begin{align*}
\abs{\I_h} 
    \leq\N(\U_h,\ell,\nu_2\rho^h) 
    \leq\N(\X_{4\nu_1\rho^h},\ell,\nu_2\rho^h) 
    =\N(\X_{\frac{4\nu_1}{\nu_2}\nu_2\rho^h},\ell,\nu_2\rho^h).
\end{align*}

If $d$ is the $(4\nu_1/\nu_2,\nu_2)$-near-optimality dimension, by definition there is $C>0$ such that 
\begin{equation}\label{eq:ih_card_bound}
\abs{\I_h}\leq\N(\X_{\frac{4\nu_1}{\nu_2}\nu_2\rho^h},\ell,\nu_2\rho^h)\leq C(\nu_2\rho^h)^{-d}
\end{equation}

\paragraph*{Second step.} We partition the nodes of $\T_n$ into three disjoint sets, $\T^1$, $\T^2$, and $\T^3$, such that $\T^1\cup\T^2\cup\T^3=\T_n$. In particular, 
\begin{itemize}
\item $\T^1$ is the set of nodes at depth $\hat{H}$ that are $2\nu_1\rho^{\hat{H}}$-optimal, i.e., $\T^1=\I_{\hat{H}}$.
\item $\T^2$ is the sets of all $2\nu_1\rho^h$-optimal nodes, for all depths $h\neq\hat{H}$, i.e., $\T^2=\cup_{h=0}^{\hat{H}-1}\I_h$.
\item $\T^3$ is the set set of all remaining nodes, i.e., $\cup_{h=1}^{\hat{H}}\J_h$ and corresponding descendant nodes.
\end{itemize}
We now define the regret $R_{n,k}$ of playing actions in $\T^k, k=1,2,3$, as 
\begin{equation*}
R_{n,k}=\sum_{t=1}^n(f(x^*)-f(X_t))\II{(H_t,I_t)\in\T^k}
\end{equation*}
where $f(X_t)$ is the expected reward collected at time step $t$ given the action $X_t$ and $(H_t,I_t)$ is the (random) node selected in that same time step. It follows that
\begin{equation}\label{Eq:Regret-decomposed-1}
\EE{R_n}=\EE{R_{n,1}}+\EE{R_{n,2}}+\EE{R_{n,3}}.
\end{equation}

\paragraph*{Third step.} We now bound each of the terms in \eqref{Eq:Regret-decomposed-1} to establish a bound for the total cumulative regret after $n$ rounds.

Starting with $R_{n,1}$, in the worst case we have that
\begin{equation}\label{Eq:Bound-R1-1}
\EE{R_{n,1}}\leq4n\nu_1\rho^{\hat{H}}.
\end{equation}

In turn, the nodes $(h,i)\in\T^2$ are $2\nu_1\rho^h$-optimal. This means that
\begin{equation*}
\EE{R_{n,2}}\leq\sum_{t=1}^n4\nu_1\rho^{H_t}\II{(H_t,I_t)\in\T^2}
\end{equation*}
However, each node $(h,i)\in\T^2$ is selected only once and then expanded---only nodes at depth $\hat{H}$ can be selected multiple times. As such, we can use the bound in \eqref{eq:ih_card_bound} to get
\begin{equation}\label{Eq:Bound-R2-1}
\EE{R_{n,2}}
  \leq\sum_{h=0}^{\hat{H}-1}4\nu_1\rho^h\abs{\I_h}
  \leq4C\nu_1\nu_2^{-d}\sum_{h=0}^{\hat{H}-1}\rho^{h(1-d)}
\end{equation}

Finally, let us consider the nodes in $\T^3$. Each node $(h,i)\in\J_h$ is the child of a node in $\I_{h-1}$ and, as such, $f(x^*)-f^*_{h,i}\leq4\nu_1\rho^{h-1}$. Additionally, $\abs{\J_h}\leq 2\abs{\I_{h-1}}$, with equality if $\I_{h-1}$ was fully expanded and all the direct children of the nodes $(h-1,i)\in\I_h$ are in $\T_n$. 
 
Using \eqref{eq:theorem_bound_Jh}, we can now bound $\EE{R_{n,3}}$ as

\begin{align}
\nonumber%
\EE{R_{n,3}}
  &\leq\sum_{h=1}^{\hat{H}}4\nu_1\rho^{h-1}\sum_{(h,i)\in\J_h}\EE{T_{h,i}(n)}\\
\nonumber%
  &\leq\sum_{h=1}^{\hat{H}}4\nu_1\rho^{h-1}\abs{\J_h}\left(\frac{8\log n}{\nu_1^2\rho^{2h}}+2(\hat{H}+1)\right)\\
\nonumber%
  &\leq\sum_{h=1}^{\hat{H}}8\nu_1\rho^{h-1}\abs{\I_{h-1}}\left(\frac{8\log n}{\nu_1^2\rho^{2h}}+2(\hat{H}+1)\right)\\ 
\label{Eq:Bound-R3-1}%
  &\leq8C\nu_1\nu_2^{-d}\sum_{h=0}^{\hat{H}-1}\rho^{h(1-d)}\left(\frac{8\log n}{\nu_1^2\rho^{2h}}+2(\hat{H}+1)\right).
\end{align}

\paragraph*{Fourth step.} We now combine the bounds~\eqref{Eq:Bound-R1-1}, \eqref{Eq:Bound-R2-1} and \eqref{Eq:Bound-R3-1} to get
\begin{equation}\label{Eq:Combined-bound-1}
\EE{R_n}
  \leq4n\nu_1\rho^{\hat{H}}+4C\nu_1\nu_2^{-d}\sum_{h=0}^{\hat{H}-1}\rho^{h(1-d)}\left(\frac{8\log n}{\nu_1^2\rho^{2h}}+4\hat{H}+5\right)
\end{equation}
Rewriting \eqref{Eq:Combined-bound-1} using asymptotic notation, we finally get
\begin{equation*}
\EE{R_n}\in O\left(n\rho^{\hat{H}}+\log n\rho^{-\hat{H}(1+d)}+\hat{H}\max\{\rho^{\hat{H}(1-d)},\hat{H}\}\right),
\end{equation*}
and the proof is complete.
\end{document}